\documentclass{article}
\usepackage[utf8]{inputenc}
\usepackage{amsmath,amssymb,amsthm,amsfonts,latexsym,graphicx,subfigure,mathtools,enumitem}

\usepackage{url}
\usepackage{xcolor}
\usepackage{svg}

\newcommand{\R}{\mathbb{R}}

\DeclareMathOperator{\supp}{supp}

\DeclareMathOperator{\Spec}{\text{Spec}}

\theoremstyle{plain}
\newtheorem{theorem}{Theorem}[section]
\newtheorem{proposition}[theorem]{Proposition}
\newtheorem{lemma}[theorem]{Lemma}
\newtheorem{corollary}[theorem]{Corollary}
\newtheorem*{corollary*}{Corollary}

\theoremstyle{definition}
\newtheorem{definition}[theorem]{Definition}
\newtheorem{example}[theorem]{Example}
\theoremstyle{remark}
\newtheorem{remark}[theorem]{Remark}

\usepackage{stmaryrd} 
\SetSymbolFont{stmry}{bold}{U}{stmry}{m}{n}
\usepackage[affil-it]{authblk}
\usepackage{booktabs}   
\usepackage[numbers,sort&compress]{natbib}

\usepackage{comment} 

\allowdisplaybreaks

\usepackage{hyperref}

\usepackage[capitalise,noabbrev]{cleveref}

\usepackage[margin=3.5cm]{geometry}

\title{Spectral Perspectives on FAT Graph Colorings}

\author{Lies Beers\thanks{e.g.m.beers@vu.nl} }
\author{Raffaella Mulas}
\date{}
\affil{Vrije Universiteit Amsterdam}
\begin{document}

\maketitle

\begin{abstract} 
We investigate Fair and Tolerant (FAT) graph colorings, a coloring framework in which each vertex is allowed to share its color with a prescribed fraction of its neighbors, while the remaining neighbors are required to be distributed evenly among the other coloring classes. In particular, we determine the FAT chromatic number for all complete multipartite graphs, and we analyze the behavior of FAT colorings under several graph operations. Although spectral methods form the primary focus, several combinatorial arguments are included to complement the results.
\end{abstract}

\section{Introduction}\label{sec:introduction}

In a recent work \cite{beersmulasFAT}, we introduced the notion of \textit{Fair and Tolerant (FAT) colorings}, in which each vertex is allowed to share its color with a given fraction of its neighbors (tolerance), while the remaining neighbors are required to be distributed evenly among the other coloring classes (fairness). Thus, in contrast to classical graph coloring, FAT colorings do not forbid adjacent vertices from sharing the same color, and additionally prescribe fixed proportions of neighbors in each coloring class. This notion naturally models situations in which strict separation between categories is unnecessary, while a balanced distribution of interactions is still required.\newline 

FAT colorings generalize the well-known \emph{equitable graph colorings} \cite{beersmulas2024}, and at the same time form a special case of both \emph{domatic colorings} \cite{zelinka1981domatic} and \emph{majority colorings} \cite{bosek2019majority}.\newline 

In \cite{beersmulasFAT}, we also introduced the \emph{FAT chromatic number} of a graph $G$, defined as the largest integer $k$ such that $G$ admits a FAT $k$-coloring. In that work, we derived general bounds for the FAT chromatic number, investigated its connections to structural and spectral features of graphs, and determined its value for certain families of graphs. These results, in particular, revealed a strong relationship between FAT colorings and the spectrum of the normalized Laplacian, which motivates the spectral focus in the present paper.\newline 

Moreover, following the introduction of FAT colorings, the notion has already attracted further attention in the literature. In particular, recent preprints by Shaebani \cite{Shaebani1,Shaebani2} investigate relations between the classical chromatic number and the FAT chromatic number, and provide explicit constructions of graphs admitting FAT colorings with fixed parameters. \newline 

In this paper, we further develop the theory of such graph colorings. In particular, using spectral methods, we determine the FAT chromatic number for all complete multipartite graphs, extending \cite{beersmulasFAT}, where this number was determined for the special case of regular Turán graphs using a more involved combinatorial argument. We also analyze the behavior of FAT colorings under several graph operations. While spectral techniques form the primary focus of this work, several combinatorial arguments are included to complement and extend the theory.

\subsection*{Structure of the paper}
In Section \ref{sec:background}, we present the relevant definitions and the background required for the remainder of the paper. In Section \ref{section:Turan}, we revisit the FAT chromatic number of regular Turán graphs and give an alternative spectral proof of the characterization obtained in \cite{beersmulasFAT}, providing a simpler approach than the earlier combinatorial argument. Moreover,
in Section \ref{sec:multipartite}, we generalize the methodology developed in Section \ref{section:Turan} to arbitrary complete multipartite graphs. Finally, in Sections \ref{sec:removing}--\ref{sec:products}, we analyze how various graph operations can be utilized to construct new FAT colorings. We begin by removing FAT coloring classes of a graph, corresponding to a fixed FAT coloring (Section \ref{sec:removing}), then proceed to examine the effect of taking the complement of a graph (Section \ref{sec:comp}), and finally investigate how FAT colorings of two graphs $G_1$ and $G_2$ can be combined via operations defined on $G_1$ and $G_2$ (Section \ref{sec:products}).

\section{Background}\label{sec:background}

In this section, we recall the definitions, notations, and fundamental results concerning FAT colorings and spectral graph theory that will be used throughout the paper. To ensure that the exposition is self-contained, we restate the necessary auxiliary results from \cite{beersmulasFAT} and include their proofs.

\subsection{FAT colorings}
We fix a simple graph $G = (V, E)$ on $N$ vertices. In particular, we assume that $G$ is undirected and unweighted, without multi-edges and without loops.\newline

Given a vertex $v\in V$ and a subset $S\subseteq V$, we let
$$
e(v,S) := |\{w\in S : v\sim w\}|
$$
be the number of neighbors of $v$ contained in $S$. Given two (not necessarily distinct) subsets $S,T\subseteq V$, we let
$$
e(S,T) := |\{\{u,v\}\in E : u\in S, v\in T\}|
$$
be the number of edges with one endpoint in $S$ and the other in $T$.\newline 

Recall that a \emph{$k$-coloring} of $G$ is a function $c:V\to \{1,\ldots,k\}$, and for each $i\in\{1,\ldots,k\}$, the set $V_i := c^{-1}(i)$ is called the \emph{coloring class} of color $i$. Moreover, the coloring is \emph{proper} if $v\sim w$ implies $c(v)\neq c(w)$. The \emph{chromatic number} (or \emph{coloring number}) $\chi=\chi(G)$ is the smallest $k$ for which there exists a proper $k$-coloring of $G$.\newline 

We now recall the definitions of FAT coloring and FAT chromatic number from \cite{beersmulasFAT}.

\begin{definition}\label{def:FAT}
    A vertex $k$-coloring with coloring classes $V_1,\ldots,V_k$ is called \emph{Fair and Tolerant} (FAT) if there exists a parameter $\alpha\in [0,1]$ such that
    \[
    e(v,V_i)=
    \begin{cases}
        \alpha \deg v, & \text{if } v\notin V_i,\\[2mm]
        \beta \deg v, & \text{if } v\in V_i,
    \end{cases}
    \]
    where $\beta := 1 - (k-1)\alpha$.  The  \emph{FAT chromatic number} of a graph $G$, denoted by $\chi^{\textrm{FAT}}(G)$ or simply $\chi^{\textrm{FAT}}$, is the largest integer $k$ such that  $G$ admits a FAT $k$-coloring. 
\end{definition}

\begin{remark}
     In Fair and Tolerant colorings, \emph{fairness} reflects the fact that every vertex assigns the same fraction $\alpha$ of its neighbors to each of the other $k-1$ coloring classes, while \emph{tolerance} reflects the fact that a remaining fraction $\beta$ of its neighbors are allowed to share its own color (Figure \ref{fig:FATillustration}). Moreover, the reason why the FAT chromatic number is defined as the largest $k$ for which $G$ admits a FAT $k$-coloring is that every graph admits a FAT $1$-coloring in which all vertices belong to the same coloring class. 
\end{remark}

\begin{figure}[h]
    \centering
    \includegraphics[width=0.5\linewidth]{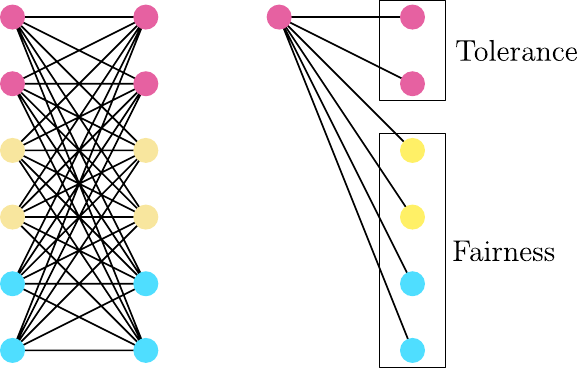}
    \caption{An illustration of the concepts of fairness and tolerance.}
    \label{fig:FATillustration}
\end{figure}

\begin{remark}
    If a vertex $v$ is \emph{isolated}, i.e., if its degree is equal to $0$, then the degree condition in Definition~\ref{def:FAT} is trivially satisfied for every coloring class $V_i$ and for all choices of $\alpha$ and $\beta$. Hence, an isolated vertex may be assigned an arbitrary color.
\end{remark}

\begin{example}\label{ex:completegraphs}
    Consider the complete graph $K_N$ on $N$ vertices. Assigning a distinct color to each vertex yields a FAT coloring with parameters
    \[
    \beta = 0, \qquad \alpha = \frac{1}{N-1}.
    \]
    Consequently,
    \[
    \chi^{\mathrm{FAT}}(K_N) = \chi(K_N) = N.
    \]
    One can easily check that all remaining FAT colorings of $K_N$ can be obtained by merging the coloring classes into groups of size $N/l$, for each divisor $l \mid N$.
\end{example}

By Example \ref{ex:completegraphs}, for every FAT coloring of a complete graph $K_N$, all coloring classes have the same cardinality. In \cite{beersmulasFAT}, it was shown that this phenomenon extends to all regular graphs:

\begin{theorem}[Theorem 2.12 in \cite{beersmulasFAT}]\label{thm:2.12}
    Let $G$ be a connected regular graph on $N$ vertices. For any FAT $k$-coloring of $G$, all coloring classes have the same size. In particular, each class has size $N/k$, and therefore $k$ divides $N$.
\end{theorem}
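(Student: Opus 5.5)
Let $G$ be $d$-regular and connected, and fix a FAT $k$-coloring with classes $V_1,\dots,V_k$ and parameters $\alpha,\beta$. The plan is to double count edges between classes and reduce the claim to a statement about a $k\times k$ quotient matrix. Assume $d>0$; since $G$ is connected this fails only when $N=1$, where the claim is trivial. For $i\neq j$, counting edges between $V_i$ and $V_j$ from each side gives
\[
e(V_i,V_j)=\sum_{v\in V_i} e(v,V_j)=\alpha d|V_i|, \qquad e(V_i,V_j)=\sum_{w\in V_j} e(w,V_i)=\alpha d|V_j|.
\]
Hence $\alpha d(|V_i|-|V_j|)=0$. So if $\alpha>0$, all classes have the same size, hence size $N/k$, and $k\mid N$.

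It remains to treat $\alpha=0$ with $k\ge 2$; this degenerate case is the main point requiring care, and connectivity is exactly what rules it out. If $\alpha=0$, then $e(v,V_i)=0$ for every $v\notin V_i$. So there are no edges between distinct classes, and each $V_i$ is a union of connected components of $G$. Since $G$ is connected and the classes are nonempty, this forces $k=1$, a contradiction. (Whether empty classes are allowed should be checked against the paper's convention. If they were allowed, the $k=1$ case would be trivial, and an empty class next to a nonempty one would force $\alpha=0$ by the displayed identity, which again contradicts connectivity unless $k=1$.)

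An alternative, more spectral route views the partition as equitable, with quotient matrix $B=d\bigl(\alpha J+(\beta-\alpha)I\bigr)$ acting on indicator vectors. One would then use the fact that the all-ones eigenvector of eigenvalue $d$ is simple by connectivity. However, the double-counting argument above is direct and sufficient, so I would present that.
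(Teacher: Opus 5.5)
Your argument is correct and is essentially the paper's proof: double-count $e(V_i,V_j)$ from both sides to get $\alpha d|V_i|=\alpha d|V_j|$, and use connectivity to rule out $\alpha=0$ when $k\ge 2$, which you justify more explicitly than the paper does. One correction to your parenthetical: if empty classes were allowed, $\alpha=0$ with all vertices in a single class would not contradict connectivity but would be a genuine counterexample (for instance $V_1=V$, $V_2=\emptyset$ with $\beta=1$), so the convention that all $k$ classes are nonempty is actually needed for the statement.
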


For completeness, we reproduce the proof, following the argument in \cite{beersmulasFAT}.

\begin{proof}
Let $G$ be a $d$-regular graph, and fix a FAT $k$-coloring with classes $V_1,\ldots,V_k$. If $k=1$, then there is only one coloring class, hence the claim is trivial. If $k>1$, then $\alpha\neq 0$ since $G$ is connected. In this case, for $i\neq j$ we have
$$
e(V_i,V_j)=\sum_{v\in V_i} e(v,V_j)=|V_i| \cdot \alpha \cdot d
$$ and
$$
e(V_i,V_j)=\sum_{w\in V_j} e(w,V_i)=|V_j| \cdot \alpha \cdot d.
$$
Hence, $$
|V_i| \cdot \alpha \cdot d=|V_j| \cdot \alpha \cdot d.
$$ Since $\alpha \neq 0$, this implies that $|V_i|=|V_j|$.  Therefore, each class has size $N/k$, and $k|N$. 
\end{proof}

\subsection{Spectral properties}

We now recall the main notions from spectral graph theory that shall be required in subsequent sections. We fix again a simple graph $G=(V,E)$ with vertices $v_1,\ldots,v_N$.

\begin{definition}
The \emph{adjacency matrix} of $G$ is the $N\times N$ matrix $A:=A(G)$ with entries
\[
A_{ij} :=
\begin{cases}
1, & \text{if } v_i\sim v_j,\\
0, & \text{otherwise.}
\end{cases}
\]
The \emph{degree matrix} of $G$ is the diagonal matrix
\[
D:=D(G)=\mathrm{diag}(\deg v_1,\ldots,\deg v_N).
\]
The \emph{normalized Laplacian} of $G$ is given by
\[
L:=L(G):=I-D^{-1}A.
\]
In case a vertex $v_i$ is isolated, we let $D_{i,i}^{-1}\coloneqq 0$, following the convention in Chung \cite{chung}.
\end{definition}

The normalized Laplacian $L$ has $N$ real, non-negative eigenvalues, all contained in the interval $[0,2]$ \cite{chung}. We denote them by
\[
0=\lambda_1\leq \lambda_2 \leq \cdots \leq \lambda_N \leq 2.
\]

Such eigenvalues reflect several structural features of the graph \cite{chung,BrouwerHaemers2012,spectra26}. In particular, the multiplicity of the eigenvalue $0$ equals the number of connected components of $G$, and when $G$ is connected, the second eigenvalue $\lambda_2$ is related to the Cheeger constant. Moreover, the largest eigenvalue $\lambda_N$ characterizes two extremal cases: it equals $2$ precisely when at least one connected component of $G$ is bipartite, and it equals $N/(N-1)$ exactly when $G$ is complete. \newline

\begin{remark}
    It is sometimes convenient to regard the normalized Laplacian $L$ as a linear operator on the space $C(V)$ of functions $f:V\to\mathbb{R}$, given by
    \[
    Lf(v) = f(v) - \frac{1}{\deg v}\sum_{w\sim v} f(w), \quad v\in V,
    \]
    which is equivalent to the matrix formulation above. In particular, a function $f$ is an eigenfunction with eigenvalue $\lambda$ if and only if $Lf=\lambda f$, that is,
    \begin{equation}\label{eq:eigenpair}
        \lambda f(v) = f(v) - \frac{1}{\deg v}\sum_{w\sim v} f(w), \quad v\in V.
    \end{equation}
    Given $f,g\in C(V)$, we also let
	\begin{equation}\label{eq:innerproduct}
	\langle f,g\rangle:=\sum_{v\in V}\deg v\cdot f(v)\cdot g(v).
	\end{equation}
    It is straightforward to check that $L$ is self-adjoint with respect to the inner product $	\langle \cdot,\cdot \rangle$, i.e.,
	\begin{equation*}
	\langle Lf,g\rangle=\langle f,Lg\rangle \quad \forall f,g\in C(V).
	\end{equation*}
\end{remark}

We now introduce a family of functions associated with ordered pairs of coloring classes, needed in Theorem \ref{thm:eigenfunctions} below.

\begin{definition}\label{def:f_ij}
   Fix a vertex $k$-coloring of $G$ with coloring classes $V_1,\ldots,V_k$. Given two distinct indices $i,j\in\{1,\ldots,k\}$, we let
    \[
    f_{ij}(v) \coloneqq \begin{cases}
        1, &\text{ if } v\in V_i,\\
        -1, &\text{ if } v\in V_j,\\
        0, &\text{ otherwise.}
    \end{cases}
    \]
\end{definition}

The following theorem provides an equivalent spectral characterization of FAT colorings and will be used throughout the paper. Although one implication follows directly from the proof of Theorem 3.3 in \cite{beersmulasFAT}, we include a complete proof here for completeness.\\

\begin{theorem}\label{thm:eigenfunctions}
    Let $G$ be a connected graph, and let $V_1,\ldots,V_k$ be a partition of $V(G)$. Then, the partition corresponds to a FAT coloring with parameter $\alpha$ if and only if, for every pair of distinct indices $i,j$, the functions $f_{ij}$ from Definition \ref{def:f_ij} are eigenfunctions of $L$. Moreover, in this case, all such eigenfunctions have the same eigenvalue $k\alpha$. 
\end{theorem}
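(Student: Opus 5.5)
Both directions come from evaluating $Lf_{ij}$ pointwise via \eqref{eq:eigenpair}. For any vertex $v$,
\[
Lf_{ij}(v) = f_{ij}(v) - \frac{e(v,V_i)-e(v,V_j)}{\deg v}.
\]
Connectedness guarantees $\deg v>0$ for every $v$ (for $N\ge 2$; the one-vertex case is trivial), so the quotient is always defined. The plan is to compare this expression with $\lambda f_{ij}(v)$ in three cases: $v\in V_i$, $v\in V_j$, and $v\in V_l$ with $l\notin\{i,j\}$.

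\emph{($\Rightarrow$)} Assume the partition is a FAT coloring with parameter $\alpha$, and write $\beta=1-(k-1)\alpha$.
\begin{itemize}
\item For $v\in V_i$: $Lf_{ij}(v)=1-(\beta-\alpha)=(k-1)\alpha+\alpha=k\alpha$.
\item For $v\in V_j$: symmetrically, $Lf_{ij}(v)=-1-(\alpha-\beta)=-k\alpha$.
\item For $v\in V_l$ with $l\neq i,j$: $Lf_{ij}(v)=0-(\alpha-\alpha)=0$.
\end{itemize}
Hence $Lf_{ij}=k\alpha f_{ij}$ for every pair $i\neq j$, which also gives the common eigenvalue claimed in the statement.

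\emph{($\Leftarrow$)} Assume each $f_{ij}$ is an eigenfunction, say $Lf_{ij}=\lambda_{ij}f_{ij}$. First I will extract the degree conditions.
\begin{itemize}
\item Evaluating at $v\in V_l$ with $l\ne i,j$ gives $e(v,V_i)=e(v,V_j)$. So every vertex outside $V_i\cup V_j$ sees equally many neighbours in $V_i$ and in $V_j$.
\item Varying the pair, each $v\in V_l$ has the same number of neighbours, call it $a_v$, in every class other than its own. This needs $k\ge3$; for $k=2$ the condition is vacuous.
\item The remaining $\deg v-(k-1)a_v$ neighbours of $v$ lie in $V_l$.
\item Evaluating at $v\in V_i$ gives $1-(b_v-a_v)/\deg v=\lambda_{ij}$, where $b_v=e(v,V_i)$. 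Substituting $b_v=\deg v-(k-1)a_v$, this becomes $k a_v/\deg v=\lambda_{ij}$.
\end{itemize}
So $a_v/\deg v=\lambda_{ij}/k$ for all $v\in V_i$. Evaluating at $v\in V_j$ gives the same ratio for $v\in V_j$.

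The main point is that this ratio is a single global constant $\alpha$. The eigenvalue attached to a pair is a priori unknown, but the ratio $a_v/\deg v$ for $v\in V_i$ is determined both by $\lambda_{ij}$ and by $\lambda_{il}$. Hence $\lambda_{ij}=\lambda_{il}$ for all $j,l\neq i$. Chaining through indices, all $\lambda_{ij}$ coincide, since any two pairs are linked via a shared index. This also covers $k=2$, where there is only one pair up to the sign symmetry $f_{ji}=-f_{ij}$. Setting $\alpha:=\lambda/k$, each vertex has exactly $\alpha\deg v$ neighbours in every other class and $\beta\deg v$ in its own, which is the FAT condition.

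It remains to check $\alpha\in[0,1]$. This holds because $\alpha=a_v/\deg v$ is a ratio of neighbour counts, and by the previous paragraph it takes this value for every vertex.
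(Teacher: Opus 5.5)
Your proof is correct. The ($\Rightarrow$) direction is the same three-case computation as in the paper. For ($\Leftarrow$) you take a genuinely different and somewhat stronger route.

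The paper assumes from the outset that all $f_{ij}$ are eigenfunctions with the \emph{same} eigenvalue $k\alpha$. From the eigenvalue equation at $v\in V_i$ it then reads off that $e(v,V_j)=e(v,V_i)-(1-k\alpha)\deg v$ is independent of $j$. Summing over $j$ gives $e(v,V_i)=(1-k\alpha+\alpha)\deg v$ and $e(v,V_j)=\alpha\deg v$. The equation at vertices outside $V_i\cup V_j$ is never used there.

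You instead allow a separate eigenvalue $\lambda_{ij}$ for each pair. You get $e(v,V_i)=e(v,V_j)$ from exactly those outside vertices, which requires no knowledge of $\lambda_{ij}$. You then show $\lambda_{ij}=k\,a_v/\deg v$ for every $v\in V_i\cup V_j$, and chaining through shared indices forces all $\lambda_{ij}$ to coincide.

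The paper's version is a little shorter, but it takes as a hypothesis the common eigenvalue that the ``Moreover'' clause is supposed to conclude. Your argument proves the converse as literally stated, where mere eigenfunctions suffice. It also makes explicit that $\deg v>0$ and $\alpha\in[0,1]$, which the paper leaves implicit.

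The only things left unsaid are minor. The case $k=1$ is vacuous (there are no pairs, and any $\alpha$ works). Your chaining step tacitly uses that every $V_i$ is nonempty, as is standard for a partition.
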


\begin{proof}

The ``only if'' implication follows exactly as in the proof of Theorem 3.3 in \cite{beersmulasFAT}. For completeness, we reproduce the argument here.

Assume that $V_1,\ldots,V_k$ are the coloring classes of a FAT coloring with parameter $\alpha$. Observe that, by Definition \ref{def:FAT},
$$
1-\beta+\alpha = 1-1+ (k-1)\alpha + \alpha = k\alpha.
$$

Hence, to prove the ``only if'' implication, we need to show that \eqref{eq:eigenpair} holds for all $v\in V$, with $f=f_{ij}$ and $\lambda:=1-\beta+\alpha$.

To this end, we fix the indices $i$ and $j$, and for $v\in V$ we distinguish the following three cases.

\begin{itemize}

\item Case 1: $v\in V_i$. In this case, $f_{ij}(v)=1$. Moreover, among the neighbors of $v$, a fraction $\beta$ lies in $V_i$, and a fraction $\alpha$ lies in $V_j$. Therefore,
$$
f_{ij}(v)-\frac{1}{\deg v}\sum_{w\sim v}f_{ij}(w)=1-\frac{1}{\deg v}\left(\beta \deg v- \alpha \deg v \right)=1-\beta + \alpha = \lambda = \lambda \cdot f_{ij}(v).
$$
    
    \item Case 2: $v\in V_j$. In this case, $f_{ij}(v)=-1$. Moreover, among the neighbors of $v$, a fraction $\alpha$ lies in $V_i$, and a fraction $\beta$ lies in $V_j$. Therefore,
$$
f_{ij}(v)-\frac{1}{\deg v}\sum_{w\sim v}f_{ij}(w)=-1-\frac{1}{\deg v}\left(\alpha \deg v- \beta \deg v \right)=-1+\beta - \alpha = -\lambda = \lambda \cdot f_{ij}(v).
$$

    \item Case 3: $v\notin V_i\cup V_j$. In this case, $f_{ij}(v)=0$. Moreover, among the neighbors of $v$, a fraction $\alpha$ lies in $V_i$, and a fraction $\alpha$ lies in $V_j$. Therefore,
$$
f_{ij}(v)-\frac{1}{\deg v}\sum_{w\sim v}f_{ij}(w)=-\frac{1}{\deg v}\left(\alpha \deg v- \alpha \deg v \right)=0=\lambda \cdot f_{ij}(v).
$$

\end{itemize}

This proves the first implication. Conversely, assume that the functions $f_{ij}$ are eigenfunctions of $L$ with eigenvalue $k\alpha$. Then, for each pair of distinct indices $i,j$, and for each $v\in V$,

\begin{equation}\label{eq:fij-ef}
    f_{ij}(v)-\frac{1}{\deg v}\sum_{w\sim v}f_{ij}(w)=k\alpha \cdot f_{ij}(v).
\end{equation}

If $v\in V_i$, by Definition \ref{def:f_ij}, \eqref{eq:fij-ef} becomes

$$
1-\frac{1}{\deg v}\cdot \Bigl(e(v,V_i)-e(v,V_j)\Bigr)=k\alpha.
$$

Hence,

$$
(1-k\alpha)\cdot \deg v = e(v,V_i)-e(v,V_j),
$$ implying that

\begin{equation}\label{eq:e(v,Vj)}
    e(v,V_j)=e(v,V_i)-(1-k\alpha)\cdot \deg v.
\end{equation}

 In particular, $e(v,V_j)$ does not depend on $j$, for each fixed $i$, each $v\in V_i$, and each $j\neq i$. From the last equality, we can also infer that

\begin{align*}
    \deg v&=e(v,V_i)+\sum_{j\neq i}e(v,V_j)\\ &=e(v,V_i)+(k-1)\cdot \Bigl(e(v,V_i)-(1-k\alpha)\cdot \deg v\Bigr)\\
    &= k\cdot e(v,V_i)-(k-1)(1-k\alpha)\deg v.
\end{align*}

Therefore,

$$
k\cdot e(v,V_i)=\deg v+(k-1)(1-k\alpha)\deg v=\deg v\Bigl(1+(k-1)(1-k\alpha)\Bigr)=\deg v(k-k^2\alpha+k\alpha),
$$ implying that

\begin{equation}\label{eq:e(v,Vi)}
   e(v,V_i)=\deg v\cdot (1-k\alpha+\alpha). 
\end{equation}

By putting together \eqref{eq:e(v,Vj)} and \eqref{eq:e(v,Vi)}, we obtain that

\begin{equation}\label{eq:e(v,Vj)2}
    e(v,V_j)=e(v,V_i)-(1-k\alpha)\cdot \deg v=\deg v\cdot (1-k\alpha+\alpha)-(1-k\alpha)\cdot \deg v=\alpha \deg v.
\end{equation}

By \eqref{eq:e(v,Vi)} and \eqref{eq:e(v,Vj)2}, the partition $V_1,\ldots,V_k$ corresponds to a FAT coloring with parameter $\alpha$. This proves the claim.
    
\end{proof}

We conclude this section by fixing the following notation.

\begin{definition}
    Let $G_1$ and $G_2$ be graphs. Whenever a function $f^{(1)}$ or $f^{(2)}$ is considered, it is implicitly assumed that
    \[
    f^{(i)} \colon V(G_i) \to \mathbb{R} \quad \text{for } i = 1,2.
    \]
    Furthermore, a function $f^{(1,2)}$ is considered, it is implicitly assumed that its domain is the vertex set of a graph product of $G_1$ and $G_2$.
\end{definition}

Finally, if an eigenvalue $\lambda$ occurs with multiplicity $k$, we write $\lambda^{(k)}$.

\section{Regular Turán graphs}\label{section:Turan}
In \cite{beersmulasFAT}, the FAT chromatic number of regular Turán graphs was determined via a combinatorial argument. In this work, we present an alternative and more concise proof of the same result, based on spectral techniques.

The motivation for providing an alternative proof is twofold. First, it highlights the effectiveness of spectral methods in the study of FAT colorings. Second, the argument we develop here arises as a special case of a more general result for complete multipartite graphs, which shall be addressed in Section \ref{sec:multipartite}.

We begin by recalling the relevant definitions and fixing notation, following the conventions in \cite{beersmulasFAT}.

\begin{definition}
    A \emph{complete multipartite graph} is a graph whose vertex set is partitioned into pairwise disjoint independent sets $P_1,\dots,P_t$ (referred to as \emph{parts} or \emph{Turán parts}), with an edge between every pair of vertices belonging to distinct parts. If the parts have cardinalities $n_1,\dots,n_t$, we denote such a graph by $K_{n_1,\dots,n_t}$.
\end{definition}

\begin{figure}[h]
    \centering
    \includegraphics[scale=0.66]{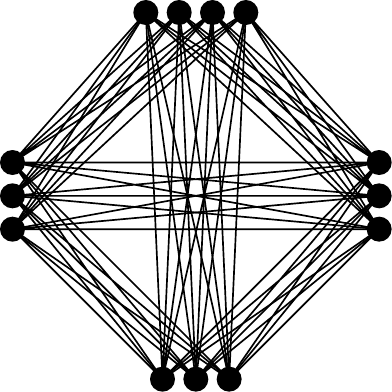}
    \caption{The Turán graph $T(13,4)$.}
    \label{fig:Turan}
\end{figure}

\begin{definition}
    The \emph{Turán graph} $T(N,t)$ (see Figure~\ref{fig:Turan}) is the complete $t$-partite graph on $N$ vertices whose parts are as equal in size as possible; that is, each part has either $\lfloor N/t \rfloor$ or $\lceil N/t \rceil$ vertices.
\end{definition}

\begin{definition}
    A \emph{regular Turán graph} is a Turán graph $T(N,t)$ in which $t$ divides $N$, so that all parts have the same size $N/t$. In this case, the graph is $d$-regular, with degree
    \[
        d = N - \frac{N}{t}.
    \]
\end{definition}

We additionally recall Theorem 4.4 from \cite{beersmulasFAT}, for which we shall provide an alternative proof.

\begin{theorem}\label{thm:Turan}[Theorem 4.4 from \cite{beersmulas2024}]
Let $T(N,t)$ denote the regular Turán graph on $N$ vertices with $t$ equal parts $P_1,\ldots,P_t$ of size $N/t$. Every FAT coloring of $T(N,t)$ belongs to one of the following two classes (see Figure \ref{fig:T(12,4)bm}):
\begin{enumerate}
    \item \emph{Balanced case.} Each Turán part contains all colors, each occurring with the same multiplicity.
    \item \emph{Monochromatic case.} Each Turán part contains exactly one color, and each color appears in the same number of Turán parts.
\end{enumerate}
In particular,
\[
\chi^{\textrm{FAT}}(T(N,t))=\max\{t,N/t\}.
\]
\end{theorem}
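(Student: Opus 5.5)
Since $T(N,t)$ is connected and regular (we assume $t\ge 2$; for $t=1$ the graph is edgeless and the statement degenerates), the plan is to combine Theorem~\ref{thm:eigenfunctions} with an explicit description of the eigenspaces of $L(T(N,t))$. Write $m:=N/t$ and $d=N-m=(t-1)m$. Then $A=J-B$, where $B$ is the block-diagonal matrix with $t$ all-ones blocks $J_m$, one for each part $P_s$. Since $L=I-\frac1d A$, it is enough to diagonalize $A$. There are three eigenspaces, which together span $C(V)$ by a dimension count ($1+(t-1)+(N-t)=N$):
\begin{itemize}
\item the constant function has $A$-eigenvalue $d$, hence $L$-eigenvalue $0$;
\item functions that are constant on each part and sum to zero over $V$ have $A$-eigenvalue $-m$, hence $L$-eigenvalue $1+\frac{m}{d}=\frac{t}{t-1}$, with multiplicity $t-1$;
\item functions that sum to zero on every part $P_s$ have $A$-eigenvalue $0$, hence $L$-eigenvalue $1$, with multiplicity $N-t$.
\end{itemize}
Pinning down these eigenspaces exactly, and not only the eigenvalues, is the key step. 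It is routine, but everything else depends on it.

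Now fix a FAT $k$-coloring with $k\ge2$. By Theorem~\ref{thm:eigenfunctions}, every $f_{ij}$ is an eigenfunction with the common eigenvalue $k\alpha$. This eigenvalue cannot be $0$, because $f_{ij}$ is nonconstant and $G$ is connected. So there are exactly two cases.

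\emph{Case $k\alpha=1$.} Each $f_{ij}$ sums to zero on every part, so $|V_i\cap P_s|=|V_j\cap P_s|$ for all $i,j,s$. This is exactly the balanced case. It forces $k\mid m$, and in particular $k\le N/t$.

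\emph{Case $k\alpha=\frac{t}{t-1}$.} Each $f_{ij}$ is constant on every part. If some vertex of $P_s$ lies in $V_i$, then $f_{ij}\equiv 1$ on $P_s$ for every $j\ne i$, so $P_s\subseteq V_i$. Hence every part is monochromatic. Theorem~\ref{thm:2.12} then gives $|V_i|=N/k$ for all $i$, so each color occupies exactly $t/k$ parts. This gives $k\mid t$ and $k\le t$.

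Together the two cases show $\chi^{\textrm{FAT}}\le\max\{t,N/t\}$.

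For the matching lower bound, I would exhibit two colorings and check them through the converse direction of Theorem~\ref{thm:eigenfunctions}, which reduces to observing that the relevant $f_{ij}$ lie in the eigenspaces above.
\begin{itemize}
\item Coloring each part with its own color gives $t$ classes whose $f_{ij}$ are constant on parts and sum to zero over $V$, so they are eigenfunctions for $\frac{t}{t-1}$.
\item Giving each part one vertex of each of $N/t$ colors makes every $f_{ij}$ sum to zero on each part, so they are eigenfunctions for $1$.
\end{itemize}
In both cases all the $f_{ij}$ share a single eigenvalue, as Theorem~\ref{thm:eigenfunctions} requires. This yields $\chi^{\textrm{FAT}}(T(N,t))=\max\{t,N/t\}$.
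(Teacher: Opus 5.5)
Your proof is correct and follows essentially the same route as the paper: by Theorem~\ref{thm:eigenfunctions} all $f_{ij}$ lie in a single eigenspace of $L$ (for $0$, $1$, or $\frac{t}{t-1}$), and these three cases yield the trivial, balanced, and monochromatic colorings respectively. The differences are minor: you derive the eigenspaces yourself from $A=J-B$ rather than citing Sun and Das, you invoke Theorem~\ref{thm:2.12} to show that each color occupies exactly $t/k$ parts (a step the paper asserts directly), and you explicitly exhibit the two extremal colorings for the lower bound, which the paper leaves implicit.
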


\begin{figure}[h]
    \centering
    \includegraphics[scale=0.66]{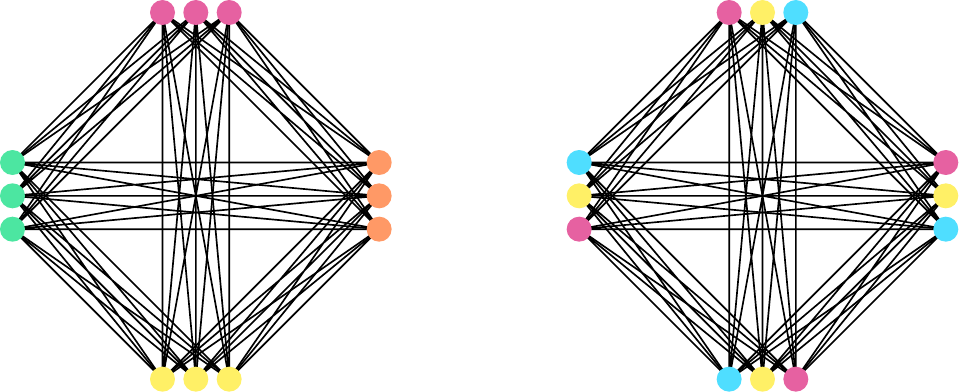}
    \caption{A monochromatic (left) and balanced (right) FAT coloring of $T(12,4)$.}
    \label{fig:T(12,4)bm}
\end{figure}

The proof of Theorem \ref{thm:Turan} that we present here is based on the spectral properties of the normalized Laplacian, and it provides a shorter, spectral alternative to the combinatorial proof given in \cite{beersmulasFAT}. Before proceeding, we fix the regular Turán graph $T(N,t)$ on $N$ vertices with $t$ equal parts $P_1,\ldots,P_t$, each of size $N/t$, and we introduce some auxiliary notation.

\begin{definition}\label{def:f_vw}
    Let $v,w\in V$ be distinct vertices. We let
    \[
    f_{v,w}\colon V\to\R
    \]
    be such that
    \begin{equation*}
    f_{v,w}(u) \coloneqq \begin{cases}
        1, &\text{if } u = v,\\
        -1, &\text{if } u = w,\\
        0, &\text{otherwise.}
    \end{cases}
    \end{equation*}
\end{definition}

Analogously to the functions $f_{ij}$ from Definition \ref{def:f_ij}, which are defined in terms of the coloring classes, we also introduce  functions $g_{ij}$ defined with respect to the Turán parts, as follows.

\begin{definition}\label{def:g_ij}
    For two distinct indices $i,j\in\{1,\ldots,t\}$, we let $g_{ij}\colon V\bigl(T(N,t)\bigr)\to\R$ be such that
    \[
    g_{ij}(v) \coloneqq \begin{cases}
        1, &\text{if } v\in P_i,\\
        -1, &\text{if } v\in P_j,\\
        0, &\text{otherwise.}
    \end{cases}
    \]
\end{definition}

Having established the above preliminaries, we are now in a position to present the spectral proof of Theorem \ref{thm:Turan}.

\begin{proof}[Spectral proof of Theorem \ref{thm:Turan}]
    The spectrum of regular Turán graphs is given by
    \[
    \biggl\{\frac{t}{t-1}^{(t-1)},1^{(N-t)},0^{(1)}\biggr\},
    \]
    as established in \cite{SunDas2020.2} by combining Theorems 3.1 and 3.5.

    The eigenspaces $E_{t/(t-1)}, E_1$ and $E_0$ associated with their corresponding eigenvalues, listed in non-increasing order, can be described in terms of eigenfunctions:
    \begin{align*}
        E_{t/(t-1)} &= \bigl\langle g_{1i}\colon i=2,\ldots,t\bigr\rangle, \\
        E_1 &= \bigl\langle f_{v,w}\colon v,w\in P_\ell \text{ for some } \ell=1,\ldots,t\bigr\rangle,\\
        E_0 &= \langle \boldsymbol 1 \rangle,
    \end{align*} where $\boldsymbol 1: V \to \mathbb{R}$ denotes the function that takes the value $1$ on every vertex.

    Now, let $V_1,\ldots,V_k$ denote the coloring classes of a fixed FAT coloring. By Theorem \ref{thm:eigenfunctions}, these FAT coloring classes induce eigenfunctions $f_{ij}$ with common eigenvalue $k\alpha$. Consequently, our FAT coloring must correspond to one of the three eigenvalues listed above. We consider the following three cases.

\begin{itemize}
    \item Case $i$:
    The functions $f_{ij}$ have eigenvalue $0$. In this case, $k=1$, implying that the FAT coloring is trivial, and therefore it falls into the monochromatic case described in the statement of Theorem \ref{thm:Turan}.
    
   \item Case $ii$:
     The functions $f_{ij}$ have eigenvalue $1$, that is, $f_{ij}\in E_1$. Since
     \[
     E_1=\left\{f\colon V\to\R : \sum_{v\in P_k}f(v)=0 \text{ for all } k=1,\ldots,t\right\},
     \]
     it follows that for any distinct $i,j\in\{1,\ldots,k\}$ and any $\ell\in\{1,\ldots,t\}$,
     \[
     \bigl|V_i\cap P_\ell\bigr|-\bigl|V_j\cap P_\ell\bigr|
       =\sum_{\substack{v\in P_\ell\\ v\in V_i}}f_{ij}(v)
        + \sum_{\substack{v\in P_\ell\\ v\in V_j}}f_{ij}(v)
       =\sum_{v\in P_\ell}f_{ij}(v)
       =0.
     \]
     Hence, for each coloring class $V_i$ and each Turán part $P_\ell$, the cardinality $\lvert V_i \cap P_\ell\rvert$ equals $|P_{\ell}|/k=N/(tk)$, and is therefore independent of the choice of $V_i$ and $P_\ell$. Equivalently, we are in the balanced case of Theorem \ref{thm:Turan}: every Turán part contains all colors, and each color occurs with the same multiplicity in every part.
     
    \item  Case $iii$:
     The functions $f_{ij}$ have eigenvalue $t/(t-1)$, i.e., $f_{ij}\in E_{t/(t-1)}$. From the above description of $E_{t/(t-1)}$, it follows directly that each Turán part contains exactly one color. 
Moreover, the FAT coloring must use at least two colors (since Case $i$ does not apply).
 Hence, every coloring class is the union of the same number of Turán parts, which again places us in the monochromatic case of Theorem \ref{thm:Turan}.
\end{itemize}
\end{proof}

The spectral proof of Theorem \ref{thm:Turan} illustrates the role of Theorem \ref{thm:eigenfunctions}. The eigenfunctions of $G$ encode essential structural information about its edge distribution, so that the analysis of FAT colorings of $G$ can be reduced to the study of its spectrum and corresponding eigenspaces. In the subsequent sections, we shall encounter further instances where the spectrum and eigenfunctions provide useful tools for investigating FAT colorings.

\section{Complete multipartite graphs}\label{sec:multipartite}
In this section, we extend the spectral proof of Theorem \ref{thm:Turan} from the previous section to the setting of arbitrary complete multipartite graphs.

We adopt the notation
\begin{equation}\label{eq:compmult}
    K_{\underbrace{n_1, \ldots, n_1}_{\theta_1}, \ldots, \underbrace{n_p, \ldots, n_p}_{\theta_p}}
\end{equation}
to denote the complete multipartite graph whose partition classes are
\begin{equation}\label{eq:partclasses}
    P^1_1,\ldots,P^1_{\theta_1},\ldots,P^p_{1},\ldots,P^p_{\theta_p}
\end{equation}
with cardinalities $\bigl|P^i_k\bigr| = n_i$,
ordered so that $n_i> n_j$ whenever $i> j$.
The (normalized Laplacian) spectra of such graphs were partially determined by Sun and Das (2020) \cite{SunDas2020.2}, who established the following result.

\begin{lemma}[Lemma 3.4 in \cite{SunDas2020.2}]\label{lem:3.4sundas}
    Let 
    \[
    K_{\underbrace{n_1, \ldots, n_1}_{\theta_1}, \ldots, \underbrace{n_p, \ldots, n_p}_{\theta_p}}
    \]
    be a complete $k$-partite graph on $N$ vertices as in \eqref{eq:compmult}. Its normalized Laplacian spectrum is
    \begin{equation}
        \biggl\{ \frac{N}{N-n_1}^{(\theta_1-1)},  \ldots, \frac{N}{N-n_p}^{(\theta_p-1)}, 1^{\bigl(N-\sum_{i=1}^p\theta_i\bigr)}, x_1^{(1)},\ldots,x_{p-1}^{(1)},0^{(1)}\biggr\},
    \end{equation}
    where the eigenvalues $x_i$ satisfy
    \[
    \frac{N}{N-n_{i+1}}<x_i<\frac{N}{N-n_i} \quad \text{for } i=1,\ldots,p-1.
    \]
\end{lemma}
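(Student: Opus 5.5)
The plan is to prove the lemma by writing down a full basis of eigenfunctions of $L$ and then counting dimensions. Each of the three families in the spectrum comes from a natural class of functions. We use that a vertex in a part of size $n_i$ has degree $N-n_i$ and is adjacent to every vertex outside its own part. The computations all rely on the eigenvalue equation \eqref{eq:eigenpair}.

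\textbf{Step 1 (eigenvalue $1$).} For distinct $v,w$ in the same part, consider $f_{v,w}$ from Definition \ref{def:f_vw}.
\begin{itemize}
\item At $v$ itself, the neighbors of $v$ are all vertices outside its part, and there $f_{v,w}$ vanishes. So the neighbor sum is $0$, and $Lf_{v,w}(v)=f_{v,w}(v)$; the same holds at $w$.
\item At a vertex $u$ in another part, $u$ sees both $v$ and $w$, so the neighbor sum is $1-1=0=f_{v,w}(u)$.
\end{itemize}
Hence $f_{v,w}$ has eigenvalue $1$. Exactly as in the spectral proof of Theorem \ref{thm:Turan}, these functions span the space of functions with zero sum on every part. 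That space has dimension $N-\sum_i\theta_i$.

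\textbf{Step 2 (eigenvalues $N/(N-n_i)$).} For two parts $P^i_a,P^i_b$ of the same size $n_i$, consider the signed indicator $g$ ($+1$ on $P^i_a$, $-1$ on $P^i_b$).
\begin{itemize}
\item At $v\in P^i_a$, the neighbor sum is $-n_i$, so $Lg(v)=1+\frac{n_i}{N-n_i}=\frac{N}{N-n_i}$. The case $v\in P^i_b$ is symmetric.
\item At a vertex outside both parts, the neighbor sum is $n_i-n_i=0$.
\end{itemize}
Taking $b=2,\ldots,\theta_i$ with $a=1$ gives $\theta_i-1$ independent eigenfunctions for the eigenvalue $N/(N-n_i)$.

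\textbf{Step 3 (the remaining $p$ eigenvalues).} Look for eigenfunctions that take a constant value $c_j$ on every part of size $n_j$. Set $S=\sum_j\theta_j n_j c_j$. At a vertex in a part of size $n_i$, the eigenvalue equation reads
\[
\lambda c_i=c_i-\frac{S-n_ic_i}{N-n_i},
\qquad\text{i.e.}\qquad
c_i=\frac{S}{N-\lambda(N-n_i)}.
\]
If $S=0$, then every $c_i$ vanishes unless $\lambda$ is one of the poles $N/(N-n_i)$. So, away from the poles, we get the secular equation
\[
F(\lambda):=\sum_{j=1}^p\frac{\theta_jn_j}{N-\lambda(N-n_j)}=1 .
\]
Clearly $F(0)=\sum_j\theta_jn_j/N=1$, which recovers the constant eigenfunction with eigenvalue $0$.

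Each summand of $F$ has positive derivative $\theta_jn_j(N-n_j)/(N-\lambda(N-n_j))^2$, so $F$ is strictly increasing on every interval between poles. Near a pole, $F$ tends to $+\infty$ from the left and to $-\infty$ from the right. Therefore each open interval between two consecutive poles contains exactly one root; these are the $x_i$, which lie strictly between consecutive values $N/(N-n_j)$. On $(-\infty,\text{first pole})$, $F$ increases from $0$, so its only root there is $\lambda=0$.

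\textbf{Step 4 (completeness).} The number of eigenfunctions found is
\[
\sum_i(\theta_i-1)+\Bigl(N-\sum_i\theta_i\Bigr)+(p-1)+1=N .
\]
It remains to check that they are linearly independent; this is the main bookkeeping obstacle.
\begin{itemize}
\item Eigenfunctions for distinct eigenvalues are orthogonal with respect to \eqref{eq:innerproduct}, because $L$ is self-adjoint.
\item Within one eigenvalue, independence is clear from the supports. If some $x_i$ happened to coincide with $1$, the Step 3 function is constant on parts, so it is still orthogonal to the zero-sum functions of Step 1.
\item The Step 2 functions for the size class $n_i$ have zero sum over each size class. This makes them independent of the Step 3 functions, which are constant on each size class.
\end{itemize}
Hence the listed multiset is exactly the normalized Laplacian spectrum.
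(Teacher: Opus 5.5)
Your proof is correct. It necessarily differs from the paper's treatment, because the paper does not prove this lemma at all: it imports the spectrum from Sun and Das and extracts eigenvector information only afterwards. The word ``precisely'' in Corollary \ref{cor:eigencompmult} silently relies on the cited multiplicities. In the $x_i$-case of Theorem \ref{thm:compmult}, constancy of an $x_i$-eigenvector on each size class is recovered indirectly, by orthogonality to the $f_{vw}$ and the $g^j_{k\ell}$.

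Your route has three steps: exhibit explicit eigenfunctions, reduce to functions constant on size classes and locate the roots of $F(\lambda)=1$ between the poles, then count to $N$. This proves Corollary \ref{cor:eigencompmult} along the way. It also gives the $x_i$-eigenvector explicitly, with $c_j\propto 1/(N-x_i(N-n_j))$. Since $x_i\neq 0$, these $p$ values are pairwise distinct, so an $x_i$-eigenvector can have the form $f_{12}$ only if $p=2$. In that case $c_1=-c_2$ forces $x=2N/(2N-n_1-n_2)$. Inserting this into $N-x(N-n_1)=S=\theta_1n_1-\theta_2n_2$ yields exactly $\theta_1(\theta_1-1)n_1^2=\theta_2(\theta_2-1)n_2^2$. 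This is a shorter path through the last part of Theorem \ref{thm:compmult} than the paper's orthogonality argument.

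Some of your bookkeeping is unnecessary. Every pole $N/(N-n_j)$ exceeds $1$, and each $x_i$ lies strictly between poles. Hence the eigenvalue families $\{0\}$, $\{1\}$, $\{N/(N-n_j)\}$, $\{x_i\}$ are pairwise disjoint, and the case $x_i=1$ cannot occur. Orthogonality of distinct eigenspaces together with independence within each eigenvalue therefore settles Step 4, and your third bullet is not needed. The discussion of $S=0$ can also be dropped, since you only need the existence of the $p$ eigenfunctions before counting.

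Your phrasing ``between consecutive poles'' is the right one. Under the ordering stated after \eqref{eq:partclasses}, namely $n_1<\cdots<n_p$, the displayed inequality $N/(N-n_{i+1})<x_i<N/(N-n_i)$ would describe empty intervals. It presupposes the decreasing order $n_1>\cdots>n_p$.
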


We now proceed to determine certain eigenfunctions of the graph
\[
K_{\underbrace{n_1, \ldots, n_1}_{\theta_1}, \ldots, \underbrace{n_p, \ldots, n_p}_{\theta_p}}.
\]
To this end, we first formulate and prove the following lemma.

\begin{lemma}\label{lem:plusminus}
Let $V_+,V_-\subset V$ be disjoint subsets of the vertex set $V$, and define the function $f_{\pm}:V\to\mathbb{R}$ by
\[
    f_{\pm}(v) := \begin{cases}
        1 &\text{ if } v\in V_+,\\
        -1 &\text{ if } v\in V_-,\\
        0 &\text{ otherwise.}
    \end{cases}
\]
Then, $f_{\pm}$ is an eigenfunction of the Laplacian $L$ with eigenvalue $\lambda$ if and only if the following two conditions are satisfied:
\begin{enumerate}
    \item For every vertex $v_0\notin V_+\cup V_-$,
    \[
        e(v_0,V_+) = e(v_0,V_-),
    \]
    i.e., $v_0$ has the same number of neighbors in $V_+$ and in $V_-$.
    \item For every $v_-\in V_-$ and $v_+\in V_+$,
    \[
        \lambda-1
        = \frac{e\bigl(v_-,V_+\bigr)-e\bigl(v_-,V_-\bigr)}{\deg v_-}
        = \frac{e\bigl(v_+,V_-\bigr)-e\bigl(v_+,V_+\bigr)}{\deg v_+}.
    \]
    In particular, if $V_-$ and $V_+$ are independent sets, the identity above simplifies to
    \[
        \lambda-1 = \frac{e\bigl(v_-,V_+\bigr)}{\deg v_-}
        = \frac{e\bigl(v_+,V_-\bigr)}{\deg v_+}.
    \]
\end{enumerate}
\end{lemma}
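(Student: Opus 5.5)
The plan is to verify the eigenvalue equation \eqref{eq:eigenpair} vertex by vertex, with $f=f_{\pm}$, splitting $V$ into the three regions $V_0:=V\setminus(V_+\cup V_-)$, $V_+$ and $V_-$. Since every quantity involved is pointwise, the ``if and only if'' comes from reading each pointwise identity in both directions. As in the paper, $\deg v\neq 0$ is implicitly assumed whenever we divide by $\deg v$. For an isolated vertex the condition is vacuous under the convention $D^{-1}_{ii}=0$, and I will simply note this.

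The key observation is that for any vertex $v$,
\[
\sum_{w\sim v} f_{\pm}(w)=e(v,V_+)-e(v,V_-),
\]
because neighbors in $V_+$ contribute $1$, neighbors in $V_-$ contribute $-1$, and all others contribute $0$.

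\emph{Case $v_0\in V_0$.} Here $f_{\pm}(v_0)=0$, so \eqref{eq:eigenpair} reads
\[
0=-\frac{1}{\deg v_0}\bigl(e(v_0,V_+)-e(v_0,V_-)\bigr).
\]
This is equivalent to condition 1 and holds independently of $\lambda$.

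\emph{Case $v_+\in V_+$.} Here $f_{\pm}(v_+)=1$, and \eqref{eq:eigenpair} becomes
\[
\lambda=1-\frac{e(v_+,V_+)-e(v_+,V_-)}{\deg v_+},
\]
which rearranges to
\[
\lambda-1=\frac{e(v_+,V_-)-e(v_+,V_+)}{\deg v_+}.
\]

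\emph{Case $v_-\in V_-$.} Here $f_{\pm}(v_-)=-1$, so
\[
-\lambda=-1-\frac{e(v_-,V_+)-e(v_-,V_-)}{\deg v_-},
\]
that is,
\[
\lambda-1=\frac{e(v_-,V_+)-e(v_-,V_-)}{\deg v_-}.
\]

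Requiring the eigenvalue equation at every vertex of $V_+\cup V_-$ is therefore exactly the statement that both ratios equal $\lambda-1$ for all $v_+$ and $v_-$, which is condition 2. Conversely, conditions 1 and 2 give \eqref{eq:eigenpair} at every vertex, so $Lf_{\pm}=\lambda f_{\pm}$. The ``in particular'' clause follows because independence of $V_\pm$ forces $e(v_+,V_+)=e(v_-,V_-)=0$.

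The one step needing care, and the only real obstacle, is the bookkeeping of signs in the $V_-$ case. Beyond that, I must confirm that $f_{\pm}$ is nonzero so that it qualifies as an eigenfunction. This requires $V_+\cup V_-\neq\emptyset$, which I will add as a tacit assumption if it is needed.
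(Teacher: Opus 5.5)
Your proposal is correct and follows essentially the same route as the paper: both rewrite \eqref{eq:eigenpair} at each vertex using $\sum_{w\sim v}f_{\pm}(w)=e(v,V_+)-e(v,V_-)$, and read off the three pointwise identities on $V\setminus(V_+\cup V_-)$, $V_+$ and $V_-$, each of which holds in both directions. One small correction to your side remark: under the convention $D^{-1}_{ii}=0$, the equation at an isolated vertex is vacuous only if that vertex lies outside $V_+\cup V_-$, whereas for an isolated vertex in $V_+\cup V_-$ it forces $\lambda=1$ (a degenerate case the lemma implicitly excludes by dividing by $\deg v$).
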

\begin{proof}
    We begin by recalling that, by \eqref{eq:eigenpair}, the pair $(f_{\pm},\lambda)$ is an eigenpair if and only if, for all $v\in V$,
    \[
    (\lambda-1)f(v) = -\frac1{\deg v}\sum_{w\sim v}f(w).
    \]
    Consequently, for any choice of $v_0\notin V_+\cup V_-$, $v_-\in V_-$, and $v_+\in V_+$, the pair $(f_{\pm},\lambda)$ is an eigenpair if and only if the following three identities are satisfied:

    \begin{align*}
        0 &= (\lambda-1)f(v_0) = \frac{e\bigl(v_0,V_+\bigr) - e\bigl(v_0,V_-\bigr)}{\deg v_0},\\
        \lambda-1 &= -(\lambda-1)f\bigl(v_-\bigr) = \frac{e\bigl(v_-,V_+\bigr) - e\bigl(v_-,V_-\bigr)}{\deg v_-},\\
        \lambda-1 &= (\lambda-1)f\bigl(v_+\bigr) = \frac{e\bigl(v_+,V_-\bigr) - e\bigl(v_+,V_+\bigr)}{\deg v_+}.
    \end{align*}
    The identities above establish the desired equalities and thereby conclude the proof.
\end{proof}
As an immediate consequence of Lemma \ref{lem:plusminus}, we obtain a characterization of a subclass of eigenfunctions of the graph $K_{\underbrace{n_1, \ldots, n_1}_{\theta_1}, \ldots, \underbrace{n_p, \ldots, n_p}_{\theta_p}}$.

\begin{corollary}\label{cor:eigencompmult}
    The eigenfunctions of $K_{\underbrace{n_1, \ldots, n_1}_{\theta_1}, \ldots, \underbrace{n_p, \ldots, n_p}_{\theta_p}}$ corresponding to the eigenvalue $N/\bigl(N-n_i\bigr)$ are precisely the linear combinations of the functions
        \begin{equation*}
        g^i_{jk}(v)\coloneqq \begin{cases}
            1, &\text{ if } v\in P^i_j,\\
            -1, &\text{ if } v\in P^i_k,\\
            0, &\text{ otherwise.}
        \end{cases}
        \end{equation*}
    Furthermore, the eigenfunctions associated with the eigenvalue $1$ are exactly the linear combinations of the functions $f_{vw}$ from \eqref{def:f_vw}, where $v$ and $w$ belong to the same partition class.
\end{corollary}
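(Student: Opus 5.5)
The plan is to verify directly, via Lemma \ref{lem:plusminus}, that each $g^i_{jk}$ and each $f_{vw}$ (with $v,w$ in a common part) is an eigenfunction with the claimed eigenvalue. Linear independence and a dimension count against the multiplicities in Lemma \ref{lem:3.4sundas} will then show that these functions span the whole eigenspaces.

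\textbf{Step 1: the functions $g^i_{jk}$.} Take $V_+=P^i_j$ and $V_-=P^i_k$; both are independent sets. A vertex $v_0$ outside $P^i_j\cup P^i_k$ lies in some other part, so it is adjacent to every vertex of both sets. Hence $e(v_0,V_+)=e(v_0,V_-)=n_i$, and condition 1 holds. For $v_+\in P^i_j$ we have $\deg v_+=N-n_i$ and $e(v_+,V_-)=n_i$, and symmetrically for $v_-$. The simplified form of condition 2 then gives
\[
\lambda-1=\frac{n_i}{N-n_i},\qquad\text{i.e.}\qquad \lambda=\frac{N}{N-n_i}.
\]
The functions $g^i_{1k}$ for $k=2,\ldots,\theta_i$ have disjoint supports apart from $P^i_1$, so they are linearly independent and span a space of dimension $\theta_i-1$. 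All $g^i_{jk}$ lie in this span, since $g^i_{jk}=g^i_{1k}-g^i_{1j}$.

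\textbf{Step 2: the functions $f_{vw}$.} Take $V_+=\{v\}$ and $V_-=\{w\}$ with $v,w$ in the same part $P$. These two vertices have identical neighbourhoods, namely $V\setminus P$, and they are not adjacent to each other. For any $v_0\neq v,w$, either $v_0\in P$, in which case it is adjacent to neither, or $v_0\notin P$, in which case it is adjacent to both. So condition 1 holds. Since $e(v,\{w\})=0$, condition 2 gives $\lambda=1$. Within a part of size $n$, the functions $f_{v_1w}$ for $w\neq v_1$ are independent and span $n-1$ dimensions. Their supports are disjoint across parts, so in total we obtain $\sum_{\text{parts}}(n-1)=N-\sum_i\theta_i$ independent functions.

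\textbf{Step 3: the dimension count.} Lemma \ref{lem:3.4sundas} gives multiplicity $\theta_i-1$ for $N/(N-n_i)$ and $N-\sum\theta_i$ for $1$, which match the dimensions found in Steps 1 and 2. The delicate point is to make sure no other listed eigenvalue coincides with one of these. The values $N/(N-n_i)$ are distinct because the $n_i$ are distinct, and each exceeds $1$. The $x_i$ lie strictly between consecutive values $N/(N-n_{i+1})$ and $N/(N-n_i)$, so they equal neither those values nor $1$. Finally $0$ differs from all of them. Hence the multiplicities from the lemma are the full eigenspace dimensions, and the spans above are exactly the eigenspaces.

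The main obstacle is this last verification that the eigenvalues do not collide, so that the multiplicities in Lemma \ref{lem:3.4sundas} really are the full eigenspace dimensions. Once that is checked, the characterization follows.
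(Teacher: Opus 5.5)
Your proof is correct and follows the paper's route. The paper presents this corollary as an immediate consequence of Lemma~\ref{lem:plusminus}, which is exactly your verification that each $g^i_{jk}$ and each $f_{vw}$ is an eigenfunction with eigenvalue $N/(N-n_i)$ and $1$, respectively. Your dimension count against the multiplicities in Lemma~\ref{lem:3.4sundas} uses the distinctness of the listed eigenvalues and the diagonalizability of $L$, which holds because $L$ is self-adjoint for $\langle\cdot,\cdot\rangle$. That count is the step the paper leaves implicit to justify the word ``precisely''.
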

We are now prepared to formulate and prove the following generalization of Theorem \ref{thm:Turan}, which yields a solution to Question 7 from \cite{beersmulasFAT}: “Can Theorem \ref{thm:Turan}, which characterizes the FAT chromatic number of regular Turán graphs, be extended to all complete multipartite graphs?”.

\begin{theorem}\label{thm:compmult}
    Let $G \coloneqq K_{\underbrace{n_1,\ldots,n_1}_{\theta_1},\ldots,\underbrace{n_p,\ldots,n_p}_{\theta_p}}$ be a complete multipartite graph as in~\eqref{eq:compmult}. Then, the FAT coloring number of $G$ is given by (see Figure~\ref{fig:FATcompmult})
    \[
        \chi^{FAT}(G) =
        \begin{cases}
            \max\{\theta_1,n_1\}, & \text{if } p=1,\\[2mm]
           2, & \text{if } p = 2, \, \gcd\{n_1,n_2\} = 1, \text{ and }\\ &  \theta
            _1(\theta_1-1)n_1^2 = \theta_2(\theta_2-1)n_2^2,\\[2mm]
            \gcd\{n_i : i=1,\ldots,p\}, & \text{otherwise.}
        \end{cases}
    \]
\end{theorem}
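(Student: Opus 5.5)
The plan is to run the spectral argument from the proof of Theorem~\ref{thm:Turan}, now using Lemma~\ref{lem:3.4sundas} and Corollary~\ref{cor:eigencompmult}. Fix a FAT $k$-coloring with classes $V_1,\ldots,V_k$ and $k\ge 2$. We assume $G$ has at least two parts; otherwise $G$ is edgeless and the statement is degenerate. Then $G$ is connected, and by Theorem~\ref{thm:eigenfunctions} all the $f_{ij}$ are eigenfunctions with a common eigenvalue $k\alpha$. This eigenvalue is one of $0$, $1$, $N/(N-n_i)$, or one of the simple eigenvalues $x_i$, and I would split into these cases.

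\textbf{Case $k\alpha=0$.} Since $G$ is connected, this forces $k=1$.

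\textbf{Case $k\alpha=1$.} By Corollary~\ref{cor:eigencompmult}, $E_1$ consists of the functions whose sum over every part vanishes. As in Case $ii$ of the Turán proof, $|V_i\cap P|=|V_j\cap P|$ for every part $P$, so $k$ divides every $n_i$ and hence $k\le\gcd\{n_i\}$. Conversely, take $k=\gcd\{n_i\}$ and color each part so that every color appears $|P|/k$ times in it. Then every $f_{ij}$ lies in $E_1$, and by Theorem~\ref{thm:eigenfunctions} this is a FAT coloring. So $\gcd\{n_i\}$ is always attained.

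\textbf{Case $k\alpha=N/(N-n_i)$.} Each $f_{ab}$ is a combination of the $g^i_{jk}$, so it vanishes outside the parts of size $n_i$. Every vertex lies in some class $V_a$, and since $k\ge2$ some $f_{ab}$ is nonzero at that vertex. Hence all parts have size $n_i$, i.e.\ $p=1$. Each $f_{ab}$ is then constant on parts, so every part is monochromatic. This is the monochromatic situation of Theorem~\ref{thm:Turan} (the graph is a regular Turán graph), which gives $k\mid\theta_1$, with $k=\theta_1$ attained. Together with the previous case this yields $\max\{\theta_1,n_1\}$ when $p=1$.

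\textbf{Case $k\alpha=x_i$.} This case is the main obstacle and produces the exceptional second line of the formula. The eigenspace is one-dimensional. If $k\ge3$, then $f_{12}$ and $f_{13}$ would be linearly independent eigenfunctions in it, so $k=2$. The eigenfunction $f_{12}$ is $\langle\cdot,\cdot\rangle$-orthogonal to $E_1$, hence constant on each part, so $V_1$ and $V_2$ are unions of whole parts.

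Next apply Lemma~\ref{lem:plusminus}, with $V_+=V_1$ and $V_-=V_2$ both independent of each other's interior structure being irrelevant: a vertex $v$ in a part $P\subseteq V_1$ is adjacent to all of $V_2$. So $\alpha=|V_2|/(N-|P|)$ must be the same for all parts in $V_1$, which forces all parts in $V_1$ to have one common size; likewise for $V_2$. If those two sizes coincided, $G$ would be regular and $f_{12}$ would lie in an $N/(N-n)$- or $0$-eigenspace rather than that of some $x_i$. So the two sizes differ, and since every part lies in $V_1$ or $V_2$, we get $p=2$ with $V_1=\bigcup_j P^1_j$ and $V_2=\bigcup_j P^2_j$.

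The remaining FAT condition is $|V_2|/(N-n_1)=|V_1|/(N-n_2)$, that is, $\theta_2n_2(N-n_2)=\theta_1n_1(N-n_1)$. Substituting $N=\theta_1n_1+\theta_2n_2$ reduces this to $\theta_1(\theta_1-1)n_1^2=\theta_2(\theta_2-1)n_2^2$. Conversely, this identity makes the two-class coloring FAT.

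\textbf{Combining the cases for $p\ge2$.} The value is $\gcd\{n_i\}$, except that when $p=2$, the identity above holds, and $\gcd=1$, the value $2$ beats $\gcd=1$. When $\gcd\ge2$, the extra coloring with $k=2$ does not exceed the gcd. This matches the three lines of the formula.
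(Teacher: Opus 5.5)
Your proposal is correct and follows essentially the same route as the paper. You split over the eigenvalues $0$, $1$, $N/(N-n_i)$ and $x_i$; you use the part-wise count in the eigenvalue-$1$ case and the support argument to exclude $N/(N-n_i)$ when $p\ge 2$. In the $x_i$ case, simplicity forces $k=2$, orthogonality to $E_1$ makes $f_{12}$ constant on parts, and the comparison $\alpha=|V_2|/(N-|P|)$ forces $p=2$ and the stated identity. Your one deviation is minor and a bit more economical: you skip the paper's orthogonality to the $g^j_{k\ell}$ and instead note that equal part sizes would give $p=1$, where no $x_i$ exists.
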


\begin{figure}[h]
    \centering
    \includegraphics[scale=0.66]{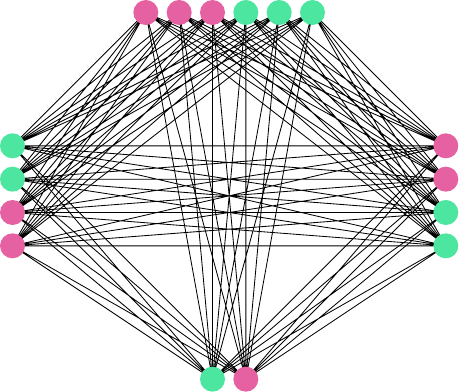}
    \caption{A FAT coloring with $\chi^{FAT}=2$ colors of the complete multipartite graph $K_{6,4,4,2}$.}
    \label{fig:FATcompmult}
\end{figure}

\begin{proof}
    We distinguish different cases according to the value of $p$. If $p=1$, then $G$ is a regular Turán graph, and the assertion follows directly from Theorem~\ref{thm:Turan}.
    
    Now assume that $p>1$. Note that the eigenvalues $N/(N-n_i)$ for $i=1,\ldots,p$ cannot give rise to a FAT coloring, since the associated eigenspace does not have support equal to $V(G)$, by Corollary \ref{cor:eigencompmult}. Furthermore, the eigenvalue $0$ gives rise exclusively to the trivial FAT coloring. Hence, it is sufficient to restrict our attention to the eigenvalues $1$ and $x_i$.\\
    
    First suppose that there exists a FAT $k$-coloring of $G$ corresponding to the eigenvalue $1$, and denote its coloring classes by $V_1,\ldots,V_k$.
    By combining Theorem~\ref{thm:eigenfunctions} with Corollary~\ref{cor:eigencompmult}, and arguing in complete analogy with Case~$ii$ in the spectral proof of Theorem~\ref{thm:Turan}, we obtain, for every coloring class $V_i$ and each partition class $P_j^\ell$ (cf.\ Equation \eqref{eq:partclasses}),
    \[
        \bigl|P_j^\ell \cap V_i\bigr| = \frac{\bigl|P_j^\ell\bigr|}{k}.
    \]
    Consequently, $k$ must be a common divisor of the sizes of all partition classes. Since $\lvert P_j^l\rvert = n_i$ for the corresponding index $i$, it follows that the FAT coloring number of $G$ is equal to the largest integer $k$ dividing $n_i$ for all $i=1,\ldots,p$, that is,
    \[
        \chi^{FAT}(G) = \gcd\{n_i : i=1,\ldots,p\}.
    \]
  
    In this case, a FAT $\chi^{FAT}(G)$-coloring is obtained by distributing the $\chi^{FAT}(G)$ colors within each partition class $P_j^{\ell}$ so that exactly $|P_j^{\ell}|/\chi^{FAT}(G)$ vertices of $P_j^{\ell}$ receive each color (see, for example, Figure~\ref{fig:FATcompmult}).\\

    Finally, assume that there exists a FAT $k$-coloring of $G$ corresponding to one of the eigenvalues $x_i$ (cf.\ Lemma \ref{lem:3.4sundas}). In this case, since the multiplicity of the eigenvalue equals $1$, the corresponding FAT coloring must consist of two coloring classes $V_1$ and $V_2$, and a corresponding eigenvector must be of the form $f_{12}$ as in Definition \ref{def:f_ij}.
    
    Furthermore, since $L$ is self-adjoint with respect to the inner product $\langle \cdot,\cdot\rangle$ from Equation \eqref{eq:innerproduct},
    the eigenvector $f_{12}$ must be orthogonal (with respect to the inner product in \eqref{eq:innerproduct}) to the eigenvectors $f_{vw}$ from Definition \ref{def:f_vw} for $v,w$ in the same partition class, since $f_{vw}$ and $f_{12}$ are eigenvectors that correspond to different eigenvalues.

    If $v$ and $w$ belong to the same partition class, they must have the same degree. For $f_{vw}$ to be orthogonal to $f_{12}$, we must have
    \begin{align*}
        \sum_{u \in V}\deg u f_{12}(u)f_{vw}(u)&= 0 \\
        \deg v f_{12}(v) - \deg w f_{12}(w) &= 0 \\
        f_{12}(u) &= f_{12}(v)
    \end{align*}
    Hence, $f_{12}$ must be constant on any single partition class.\\

    Analogously, the eigenvector $f_{12}$ must be orthogonal (with respect to the inner product in \eqref{eq:innerproduct}) to the eigenvectors with eigenvalue $N/(N-n_j)\neq x_i$ for $k=1,\ldots,p$.

    Let now $j\in \{1,\ldots,p\}$ such that $\theta_j\geq 2$ (which is the case if and only if $N/(N-n_j)$ is an eigenvalue). Let also $P_k^j \neq P_{\ell}^j$ be two distinct partition classes of cardinality $n_j$, and consider the eigenfunction $g_{k \ell}^j$ as in Corollary \ref{cor:eigencompmult}. For $f_{12}$ to be orthogonal to $g_{k \ell}^j$, it must be the case that
    \begin{align*}
        &\sum_{u\in V}\deg u f_{12}(u)g_{k \ell}^j(u) = 0, \\
        &\sum_{u\in P_k^j} \deg u f_{12}(u) = \sum_{u\in P_\ell^j} \deg u f_{12}(u).
    \end{align*}
    Because $\deg u_1 = \deg u_2$ for $u_1\in P_k^j$ and $u_2\in P_\ell^j$, because we already established that $f_{12}$ must be constant on $P_k^j$ and on $P_\ell^j$, and because $|P_k^j| = |P_\ell^j| = \theta_j$,
    it follows that $f_{12}$ must have the same value on all classes $P_m^j$ for any fixed $j\in \{1,\ldots,p\}$.\\

    Now, let $i_1,i_2\in \{1,\ldots,p\}$ be such that $P_j^{i_1},P_k^{i_2}\subseteq V_1$. Let $v_1\in P_j^{i_1}$ and $v_2\in P_k^{i_2}$. Then,
    \[
    \deg v_1 = N - n_{i_1} \text{ and } \deg v_2 = N-n_{i_2}.
    \]
    If $V_1$ and $V_2$ induce a FAT coloring with parameter $\alpha$, we must have
    \[
    \frac{|V_2|}{N-n_{i_1}} = \alpha = \frac{|V_2|}{N-n_{i_2}}.
    \]
    Consequently, $n_{i_1} = n_{i_2}$, which means that, without loss of generality, $$V_1 = \bigcup_{j=1}^{\theta_1} P_j^1.$$ Similarly, one can derive that, without loss of generality,
    \[
    V_2 = \bigcup_{j=1}^{\theta_2} P_j^2.
    \]
    It follows that $p=2$. Furthermore, note that, for $v_i \in P_j^i$, we have
    \begin{align*}
        \deg v_1 &= \theta_2 \cdot n_2 + (\theta_1-1)\cdot n_1,\\
        \deg v_2 &= \theta_1 \cdot n_1 + (\theta_2 -1) \cdot n_2.
    \end{align*}
    Since a vertex in $V_1$ has $\theta_2\cdot n_2$ neighbors in $V_2$, and a vertex in $V_2$ has $\theta_1\cdot n_1$ neighbors in $V_1$, we see that
    \[
    \frac{\theta_2 \cdot n_2}{\theta_2 \cdot n_2 + (\theta_1-1)\cdot n_1} = \alpha = \frac{\theta_1\cdot n_1}{\theta_1 \cdot n_1 + (\theta_2-1)\cdot n_2}.
    \]
    Rearranging yields
    \begin{equation}\label{eq:x_i}
    \theta_1(\theta_1-1)n_1^2 = \theta_2(\theta_2-1)n_2^2.
    \end{equation}
    Hence, if $p=2$ and if Equation \eqref{eq:x_i} is satisfied, then $G$ admits a FAT coloring with $2$ colors. If $\gcd\{n_1,n_2\}>1$, $G$ also admits a FAT coloring with $\gcd\{n_1,n_2\}\geq 2$ colors. Hence, when $\gcd\{n_1,n_2\}=1$, one finds that $\chi^{FAT} = 2$ if and only if Equation \eqref{eq:x_i} is satisfied.
\end{proof}
One might wonder if there exist complete multipartite graphs with coloring classes of two different sizes $n_1$ and $n_2$, such that Equation \eqref{eq:x_i} is satisfied. We therefore end this section with the following remark concerning solutions of the equation.\\

\begin{remark}
    We are interested in solutions of Equation \eqref{eq:x_i} for $\theta_i,n_i\in \mathbb Z_{\geq1}$ and $i=1,2$, where $\gcd\{n_1,n_2\} = 1$. These variables correspond to the complete multipartite graph
    \[
    K_{\underbrace{n_1, \ldots, n_1}_{\theta_1}, \underbrace{n_2, \ldots, n_2}_{\theta_2}}.
    \]
    First of all, the equation 
    \[
    \theta_1(\theta_1-1)n_1^2 = \theta_2(\theta_2-1)n_2^2
    \]
    has trivial solutions $\theta_1=\theta_2 = 1$ and $n_1,n_2\in \mathbb Z_{\geq1}$ such that $\gcd\{n_1,n_2\}=1$. Those correspond to complete bipartite graphs. The corresponding FAT $2$-coloring is the bipartite coloring, with parameter $\alpha = 1$.

    A non-trivial solution to Equation \eqref{eq:x_i} with $\gcd\{n_1,n_2\}=1$ is given by
    \[
    (n_1,\theta_1) = (6,2) \quad \text{ and }\quad (n_2,\theta_2) = (1,9).
    \]
    The graph $K_{6,6,1,1,1,1,1,1,1,1,1}$ and its FAT $2$-coloring can be found in Figure \ref{fig:FATcomplicated}. The corresponding parameter of its unique FAT $2$-coloring is
    \[
    \alpha = \frac{12}{20}=\frac{9}{15}=\frac35.
    \]
\end{remark}

\begin{figure}
        \centering
        \includegraphics[scale=0.66]{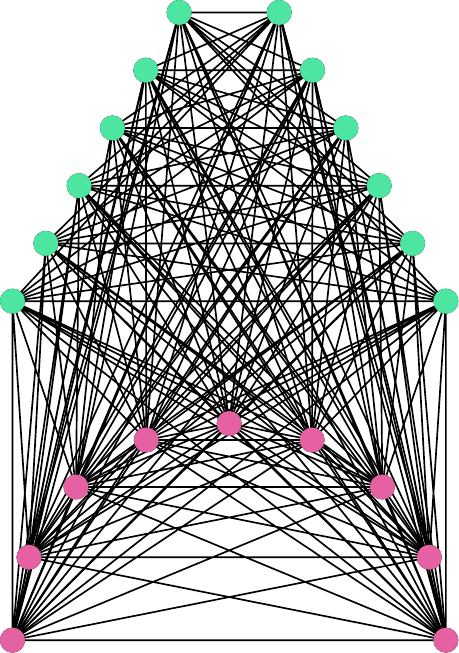}
        \caption{A FAT $2$-coloring of the graph $K_{6,6,1,1,1,1,1,1,1,1,1}$.}
        \label{fig:FATcomplicated}
    \end{figure}

\section{Removing a coloring class from a FAT coloring}\label{sec:removing}
In this section we show that, if a graph $G$ admits a FAT $k$-coloring, then it is possible to remove $i$ coloring classes from $G$ (see Figure~\ref{fig:removingcc}) for some $i \in \{1,\ldots,k-1\}$ such that the resulting graph remains FAT $(k-i)$-colorable. Furthermore, we establish a spectral property of the resulting graph.

\begin{figure}
    \centering
    \includegraphics[scale=0.66]{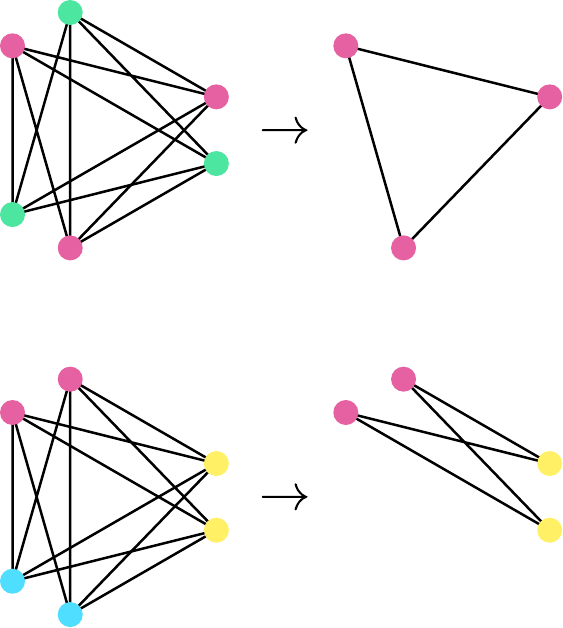}
    \caption{The Turán graph $T(6,3)$ before and after the removal of a coloring class corresponding to two distinct FAT colorings.}
    \label{fig:removingcc}
\end{figure}

\begin{theorem}\label{thm:removing}
    Let $V_1,\ldots,V_k$ denote the coloring classes of $G$ with respect to a FAT $k$-coloring with parameter $\alpha$, and let $I\subseteq \{1,\ldots,k\}$ be an index set of size $\lvert I\rvert\leq k-2$. Then, the induced subgraph $G\bigl(V\setminus\bigcup_{i\in I}V_i\bigr)$ admits a FAT $(k-\lvert I\rvert)$-coloring with parameter
    \[
        \frac{\alpha}{1-\alpha\lvert I\rvert}.
    \]
\end{theorem}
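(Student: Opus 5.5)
Let $W := V\setminus\bigcup_{i\in I}V_i$ and $H := G(W)$. The plan is a direct combinatorial verification of Definition~\ref{def:FAT} on $H$, using the remaining classes $V_j$, $j\notin I$, as the coloring classes. Set $m:=\lvert I\rvert$.

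First, compute degrees in $H$. For $v\in V_j$ with $j\notin I$, the fairness condition gives $e(v,V_i)=\alpha\deg_G v$ for each $i\in I$, since $v\notin V_i$. Hence
\[
\deg_H v=\deg_G v-\sum_{i\in I}e(v,V_i)=(1-m\alpha)\deg_G v .
\]
Because $m\le k-2$, we have $1-m\alpha\ge 1-(k-1)\alpha+\alpha=\beta+\alpha>0$ whenever $\alpha>0$. So the factor is positive and the proposed parameter $\alpha/(1-m\alpha)$ is well defined. The degenerate case $\alpha=0$ is immediate: the target parameter is then $0$.

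Next, the neighborhoods inside $H$ are unchanged for the surviving classes. For $v\in V_j$ and $\ell\notin I$ with $\ell\neq j$,
\[
e_H(v,V_\ell)=e_G(v,V_\ell)=\alpha\deg_G v=\frac{\alpha}{1-m\alpha}\deg_H v .
\]
Also $e_H(v,V_j)=\beta\deg_G v$. One then checks that this equals $\bigl(1-(k-m-1)\alpha'\bigr)\deg_H v$ with $\alpha'=\alpha/(1-m\alpha)$. Indeed,
\[
(1-m\alpha)-(k-m-1)\alpha=1-(k-1)\alpha=\beta,
\]
which also follows automatically, since the degrees in $H$ add up. It remains to note $\alpha'\in[0,1]$: this holds because $e_H(v,V_\ell)\le\deg_H v$ for any non-isolated $v$. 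Isolated vertices of $H$ arise only if $\deg_G v=0$ and can be colored arbitrarily.

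There is no serious obstacle here. The only care points are the positivity of $1-m\alpha$, which is where the hypothesis $\lvert I\rvert\le k-2$ is used, and the requirement that the remaining classes are all nonempty so that we genuinely get $k-m$ colors. Nonemptiness is inherited from the original coloring.
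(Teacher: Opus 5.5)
Your proposal is correct and follows essentially the same route as the paper's (first, combinatorial) proof. Both compute $\deg_{G_I}v=(1-\alpha\lvert I\rvert)\deg_G v$ and then rescale the unchanged counts $e(v,V_\ell)=\alpha\deg_G v$ for the surviving classes, obtaining the parameter $\alpha/(1-\alpha\lvert I\rvert)$; your extra checks (positivity of $1-\lvert I\rvert\alpha$, the own-class identity via $\beta$, and $\alpha'\le 1$) are details the paper leaves implicit, and they are sound.
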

\begin{proof}
    Let \[
        G_{I}\coloneqq G\bigl(V\setminus\bigcup_{i\in I}V_i\bigr),
    \] and fix $v\in V(G_I)$.
    In the original graph $G$, the vertex $v$ has $\deg_G(v)$ neighbors. In the induced subgraph $G_I$, all neighbors of $v$ that lie in the coloring classes indexed by $I$ are removed. Since the given coloring is FAT with parameter $\alpha$, the vertex $v$ has exactly $\alpha\,\deg_G(v)$ neighbors in each coloring class $V_i$ for which $v\notin V_i$. In particular, for each $i\in I$, $v$ has $\alpha\,\deg_G(v)$ neighbors in $V_i$, and hence
    \[
        \deg_{G_I}(v)
        \;=\;
        \deg_G(v) - \alpha\lvert I\rvert\,\deg_G(v)
        \;=\;
        \bigl(1-\alpha\lvert I\rvert\bigr)\deg_G(v).
    \]
    Moreover, for any coloring class $V_i$ with $i\notin I$ and $v\notin V_i$, the vertex $v$ still has $\alpha\,\deg_G(v)$ neighbors in $V_i$. Rewriting in terms of $\deg_{G_I}(v)$ yields
    \[
        \alpha\,\deg_G(v)
        \;=\;
        \frac{\alpha}{1-\alpha\lvert I\rvert}\,\bigl(1-\alpha\lvert I\rvert\bigr)\deg_G(v)
        \;=\;
        \frac{\alpha}{1-\alpha\lvert I\rvert}\,\deg_{G_I}(v).
    \]
    Consequently, in $G_I$ the vertex $v$ has exactly
    \[
        \frac{\alpha}{1-\alpha\lvert I\rvert}\,\deg_{G_I}(v)
    \]
    neighbors in each remaining coloring class $V_i$ with $i\notin I$ and $v\notin V_i$. Hence, the sets $V_i$ for $i\in \{1,\ldots,k\}\setminus I$ form a FAT $(k-\lvert I\rvert)$-coloring of $G_I$ with parameter $\alpha/(1-\alpha\lvert I\rvert)$.
\end{proof}
We now consider the spectral viewpoint. We begin by establishing a general statement concerning eigenfunctions of graphs obtained by deleting certain FAT coloring classes from a graph that admits a FAT coloring. The result generalizes Proposition 3.14 from \cite{beersmulas2024}, which addresses only the case of proper FAT $k$-colorings (for which $\beta = 0$ and $\alpha = 1/(k-1)$).

\begin{proposition}\label{prop:removingef}
    Let $G$ be a graph, and let $V_1,\ldots,V_k$ denote the coloring classes with respect to a fixed FAT $k$-coloring.
    Furthermore, let $f\colon V(G)\to \R$ be an eigenfunction with corresponding eigenvalue $\lambda$, and suppose that $\supp(f)\subseteq \bigcup_{i\in I}V_i$ for some index set $I\subseteq \{1,\ldots,k\}$ with $|I|\geq 2$. Then the restriction $f|_{G\left(\bigcup_{i\in I}V_i\right)}$ is an eigenfunction of the induced subgraph $G\left(\bigcup_{i\in I}V_i\right)$ with eigenvalue
    \[
        1+\frac{\lambda-1}{1-|I|\alpha}.
    \]
\end{proposition}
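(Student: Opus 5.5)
The plan is a direct vertex-by-vertex verification of the eigenvalue equation \eqref{eq:eigenpair}, using the degree computation from the proof of Theorem~\ref{thm:removing}. Set $U := \bigcup_{i\in I}V_i$ and $H := G(U)$, and write $f_H := f|_U$. Since $\supp(f)\subseteq U$ and $f\neq 0$, the restriction $f_H$ is not identically zero, so it is a legitimate candidate eigenfunction of $H$. It therefore suffices to check \eqref{eq:eigenpair} on $H$ at every $v\in U$ with the claimed eigenvalue.

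\textbf{Step 1 (degrees in $H$).} Fix $v\in U$, say $v\in V_j$ with $j\in I$. In $G$, $v$ has exactly $\alpha\deg_G v$ neighbours in each class $V_m$ with $m\neq j$. Passing to $H$ deletes exactly the neighbours lying in the classes $V_m$ with $m\notin I$. Exactly as in the proof of Theorem~\ref{thm:removing}, this gives
\[
\deg_H v = (1-r\alpha)\deg_G v,
\]
where $r$ is the number of deleted classes. Here $r=k-|I|$, and this is the quantity the denominator $1-|I|\alpha$ in the statement must be read as, following the convention of Theorem~\ref{thm:removing}, where $|I|$ counts the classes being removed. The key point is that the ratio $\deg_H v/\deg_G v$ is the \emph{same constant} for every $v\in U$.

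\textbf{Step 2 (eigenvalue equation).} Rewrite \eqref{eq:eigenpair} for $f$ at $v\in U$ as
\[
(\lambda-1)\deg_G v\, f(v) = -\sum_{w\sim_G v} f(w).
\]
Every $G$-neighbour $w$ of $v$ outside $U$ satisfies $f(w)=0$, because $\supp(f)\subseteq U$. Hence the right-hand side equals $-\sum_{w\sim_H v} f_H(w)$. Substituting $\deg_G v = \deg_H v/(1-r\alpha)$ from Step 1 gives
\[
\Bigl(\tfrac{\lambda-1}{1-r\alpha}\Bigr)\deg_H v\, f_H(v) = -\sum_{w\sim_H v} f_H(w).
\]
This is precisely \eqref{eq:eigenpair} in $H$ with eigenvalue $1+\frac{\lambda-1}{1-r\alpha}$. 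Note that we only use the equation at vertices of $U$; the equations at vertices outside $U$ are simply discarded.

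\textbf{Main obstacle.} The only delicate point is degeneracy, namely the case $1-r\alpha=0$, in which $v$ becomes isolated in $H$ and the division in Step 2 is invalid. I would handle it as follows.
\begin{itemize}
\item By the FAT condition, $1-r\alpha = \beta+(|I|-1)\alpha$.
\item Since $|I|\geq 2$, this is zero only if $\beta=\alpha=0$, i.e.\ only if $v$ is isolated already in $G$.
\item If $G$ has no isolated vertices in $U$, then $1-r\alpha>0$ and the computation above goes through.
\item If some $v\in U$ is isolated in $G$, both sides of \eqref{eq:eigenpair} reduce to $f(v)$, using the convention $D^{-1}_{ii}=0$. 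In that case I would either assume $G$ connected, as in the rest of the paper, or note that such vertices impose no constraint in $H$ either.
\end{itemize}
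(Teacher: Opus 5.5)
Your proposal is correct and is essentially the paper's own argument: you verify \eqref{eq:eigenpair} at each vertex of $U$, using that $f$ vanishes on the deleted neighbours and that all degrees shrink by the common factor $1-(k-|I|)\alpha=\beta+(|I|-1)\alpha$. Your degeneracy case never actually occurs: as soon as $G$ has an edge we have $\beta\geq 0$, and this factor is then strictly positive because $\beta+(k-1)\alpha=1$ rules out $\alpha=\beta=0$.

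Your reading of the denominator is also the right one. The paper's proof says $\alpha|I|\deg_G v$ neighbours are removed, which matches only the convention of Theorem~\ref{thm:removing} and its spectral proof, where $I$ indexes the \emph{deleted} classes. Taken literally, the stated formula already fails for $K_5$ with singleton classes, $f=f_{12}$ and $I=\{1,2\}$: it predicts $3/2$, while the restriction $(1,-1)$ on $K_2$ has eigenvalue $2$.
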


\begin{proof}
    Set $G_I\coloneqq G\left(\bigcup_{i\in I}V_i\right)$, and let $f_I\coloneqq f|_{G_I}$ denote the restriction of $f$ to $G_I$. Moreover, let $v\in \bigcup_{i\in I}V_i$. By Equation \eqref{eq:eigenpair}, we obtain
    \begin{align*}
        \biggl(1-\biggl(1+\frac{\lambda-1}{1-|I|\alpha}\biggr)\biggr)f_I(v)
        &= \biggl(\frac{1-\lambda}{1-|I|\alpha}\biggr)f(v)\\
        &= \frac{1}{1-|I|\alpha}\cdot\frac{1}{\deg_G v}\sum_{\substack{w\in V(G):\\ \{w,v\}\in E(G)}} f(w)\\
        &= \frac{1}{\deg_{G_I}v}\sum_{\substack{w\in V(G_I):\\\{w,v\}\in E(G_I)}} f_I(w).
    \end{align*}
    The final equality follows from the fact that, for each vertex $v$, we remove exactly $\alpha |I| \deg_G v$ of its neighbors in the construction of the graph $G_I$, together with the assumption that $\supp(f)\subseteq \bigcup_{i\in I}V_i$. Consequently,
\[
    \deg_{G_I} v = (1 - |I|\alpha)\deg_G v.
\]
Hence, the function $f_I$ satisfies the eigenvalue equation on $G_I$ with corresponding eigenvalue
\[
    1 + \frac{\lambda - 1}{1 - |I|\alpha}.\qedhere
\]
\end{proof}

We conclude this section by presenting an alternative proof of Theorem \ref{thm:removing} based on spectral methods.

\begin{proof}[Spectral proof of Theorem \ref{thm:removing}]

Let $V_1,\ldots,V_k$ denote the coloring classes of $G$ with respect to a FAT $k$-coloring with parameter $\alpha$. Let $I \subseteq \{1,\ldots,k\}$ be a subset with size $\lvert I \rvert \leq k-2$.

For $i,j \in \{1,\ldots,k\} \setminus I$ with $i \neq j$, define $f_{ij}^{(I)} \coloneqq f_{ij}\rvert_{G_I}$, where $G_I$ is as in the proof of Theorem \ref{thm:removing}. Since $f_{ij}$ is an eigenfunction of $G$ with eigenvalue $k\alpha$ by Theorem \ref{thm:eigenfunctions}, it follows by Proposition \ref{prop:removingef} that $f_{ij}^{(I)}$ is an eigenfunction of $G_I$ with eigenvalue
\begin{align*}
    1 + \frac{\lambda - 1}{1 - |I|\alpha} 
    &= 1 + \frac{k\alpha - 1}{1 - |I|\alpha} \\
    &= \frac{1 - |I|\alpha}{1 - |I|\alpha} + \frac{k\alpha - 1}{1 - |I|\alpha} \\
    &= (k - |I|)\cdot \frac{\alpha}{1 - \alpha |I|}.
\end{align*}
The statement follows by Theorem \ref{thm:eigenfunctions}.
\end{proof}

\section{Complement}\label{sec:comp}

The \emph{complement} of a graph $G$ is defined as the graph $\overline{G}$ with vertex set $V(\overline{G})\coloneqq V(G)$ and edge set
\[
\quad 
E(\overline G)\coloneqq \bigl\{\{u,v\}\colon u,v\in V(G)\ \text{distinct, and}\ \{u,v\}\notin E(G)\bigr\}.
\]

In this section, we fix a $d$-regular graph $G$ and investigate FAT colorings of its complement $\overline{G}$. After giving the relevant definitions, we establish a theorem that relates FAT colorings of $G$ to those of $\overline{G}$.

Before stating the main result of this section we recall that, by Theorem \ref{thm:2.12} (Theorem 2.12 in \cite{beersmulasFAT}), FAT colorings of connected $d$-regular graphs necessarily have coloring classes of equal size. We now extend the statement to the general (not necessarily connected) $d$-regular case.

\begin{theorem}\label{thm:FATreggen}
    Let $G$ be a $d$-regular graph. Any FAT $k$-coloring of $G$ with parameter $\alpha$ satisfies exactly one of the following:
    \begin{enumerate}
        \item $\alpha = 0$, in which case each connected component of $G$ is monochromatic, and every color appears in at least one component.
        \item $\alpha > 0$, in which case each connected component admits $k$ coloring classes of equal size, and therefore every coloring class of $G$ has size $N/k$.
    \end{enumerate}
\end{theorem}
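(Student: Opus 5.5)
The proof splits according to whether $\alpha=0$ or $\alpha>0$. These two cases are mutually exclusive and cover every $\alpha\in[0,1]$, so "exactly one" is automatic once each case is handled. Throughout, we restrict the coloring to a connected component $C$ of $G$. Every neighbor of a vertex $v\in C$ lies in $C$, so the numbers $e(v,V_i)$ equal $e(v,V_i\cap C)$. Hence the classes $V_i\cap C$, with the empty ones discarded, satisfy the FAT conditions on $C$ with the same parameter $\alpha$.

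\emph{Case $\alpha=0$.} Here a vertex $v$ has no neighbors outside its own class and $\beta=1$. Within a component $C$, any path only joins vertices of the same color, so $C$ is monochromatic. We should also treat isolated vertices ($d=0$). Then every component is a single vertex, hence trivially monochromatic, and any $\alpha$ is admissible; in that situation we take $\alpha=0$ by convention, or note that the case $\alpha>0$ is vacuous in content. For the clause "every color appears in at least one component", we read a $k$-coloring as having $k$ nonempty classes. The classes are nonempty and each is a union of components, so each color occurs in some component.

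\emph{Case $\alpha>0$.} Assume $d\ge1$. Fix a component $C$; it is connected and $d$-regular. The key point is that every color appears in $C$. Any $v\in C$ has $\alpha d>0$ neighbors in each $V_i$ with $i\neq c(v)$, and these neighbors lie in $C$. So $V_i\cap C\neq\emptyset$ for all $i$, and the restriction to $C$ is a genuine FAT $k$-coloring of the connected regular graph $C$. Theorem~\ref{thm:2.12}, or directly its double-counting argument $|V_i\cap C|\,\alpha d=e(V_i\cap C,V_j\cap C)=|V_j\cap C|\,\alpha d$, then gives $|V_i\cap C|=|C|/k$ for all $i$. Summing over components yields $|V_i|=\sum_C|C|/k=N/k$.

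The only subtle step is the one just described: showing that with $\alpha>0$ all $k$ colors occur in every component, so that Theorem~\ref{thm:2.12} applies with the same $k$. Regularity with $d\ge1$ handles this. Apart from that, care is needed only with the degenerate edgeless case when stating that the two cases are exclusive.
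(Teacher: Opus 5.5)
Your proposal is correct and follows essentially the same route as the paper: components are monochromatic when $\alpha=0$, and when $\alpha>0$ every vertex has $\alpha d>0$ neighbors in each other class, so all $k$ colors occur in every component and Theorem~\ref{thm:2.12} applies componentwise before summing. Your explicit $d\ge 1$ caveat is a point the paper's proof leaves implicit. Note, however, that for $d=0$ the case $\alpha>0$ is not merely vacuous but can genuinely fail (e.g.\ three isolated vertices split into classes of sizes $1$ and $2$ with $\alpha=\tfrac12$), so the convention $\alpha=0$ (or the hypothesis $d\ge 1$) is really needed.
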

\begin{proof}
    We distinguish the two cases appearing in the statement.
    \begin{enumerate}
        \item If $\alpha = 0$, then by definition no vertex has a neighbor of a different color. Consequently, each connected component must be monochromatic. Since the FAT coloring uses exactly $k$ colors, each color must occur in at least one connected component.
        \item If $\alpha > 0$, then by the definition of a FAT coloring, every vertex has neighbors in all coloring classes distinct from its own (in particular, this holds when $\alpha = 1/(k-1)$). Hence, every color is present in each connected component. Applying Theorem \ref{thm:2.12} to each component, we deduce that every component admits $k$ coloring classes of equal size. Summing over all components, it follows that each coloring class in $G$ has size $N/k$. \qedhere
    \end{enumerate}
\end{proof}
We apply Theorem \ref{thm:FATreggen} in the proof of the following theorem, which establishes a correspondence between FAT colorings of a $d$‑regular graph and FAT colorings of its complement.

\begin{theorem}\label{thm:comp}
    Let $G$ be a $d$-regular graph whose complement is connected, and let $V_1,\ldots,V_k$ be a partition of $V(G)$. Then, the following statements are equivalent:
    \begin{enumerate}[label=(\arabic*)]
        \item The partition $V_1,\ldots,V_k$ induces a FAT $k$-coloring of $G$ with parameter $\alpha$. Furthermore, when $\alpha = 0$, the corresponding coloring classes all have the same cardinality.
        \item The partition $V_1,\ldots,V_k$ induces a FAT $k$-coloring of $\overline G$ with parameter
        \[
        \frac{N/k-d\cdot \alpha}{N-d-1}.
        \]
    \end{enumerate}
\end{theorem}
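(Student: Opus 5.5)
The plan is a direct counting argument. In $\overline G$ every vertex has degree $N-d-1$, and for any set $S$ and any vertex $v$ we have
\[
e_{\overline G}(v,S)=|S|-e_G(v,S)-[v\in S].
\]
Both directions will follow from this identity, provided the class sizes are known to be equal to $N/k$.

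\textbf{(1)$\Rightarrow$(2).} First we establish $|V_i|=N/k$ for all $i$. If $\alpha=0$, this is the extra hypothesis in (1). If $\alpha>0$, it is case 2 of Theorem~\ref{thm:FATreggen}. Now take $v\notin V_i$. Then
\[
e_{\overline G}(v,V_i)=N/k-\alpha d=\frac{N/k-d\alpha}{N-d-1}\,(N-d-1),
\]
which is the required fairness with parameter $\alpha':=(N/k-d\alpha)/(N-d-1)$. For $v\in V_i$ we get $N/k-\beta d-1$. Since the neighbours of $v$ in $\overline G$ sum to $N-d-1$ across all classes, tolerance with $\beta'=1-(k-1)\alpha'$ is then automatic. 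We should also check that $\alpha'\in[0,1]$; this holds because $e_{\overline G}(v,V_i)$ is a genuine count between $0$ and $\deg_{\overline G} v$. If $N-d-1=0$, then $\overline G$ is edgeless and connected, so $N=1$ and everything is trivial; this degenerate case should be noted separately.

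\textbf{(2)$\Rightarrow$(1).} Here $\overline G$ is regular (of degree $N-d-1$) and connected by hypothesis. So Theorem~\ref{thm:2.12} gives $|V_i|=N/k$ for all $i$, and in particular the equal-cardinality clause of (1) holds whenever $\alpha=0$. Applying the identity in reverse, with $G=\overline{\overline G}$, gives for $v\notin V_i$
\[
e_G(v,V_i)=N/k-\alpha'(N-d-1)=d\alpha.
\]
So $G$ is fair with parameter $\alpha$, and tolerance again follows by summing. If $d=0$ the statement is vacuous in $G$; then $\alpha$ should be read as determined by the formula, and this should be noted.

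\textbf{Main obstacle.} The substantive step is securing equal class sizes in each direction. On the $G$ side, $G$ may be disconnected, which is why Theorem~\ref{thm:FATreggen} and the extra hypothesis for $\alpha=0$ are needed. On the $\overline G$ side, the connectivity of $\overline G$ allows Theorem~\ref{thm:2.12} to be applied. Once the sizes are fixed, the rest is bookkeeping with the complement identity.
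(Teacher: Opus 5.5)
Your argument is the paper's proof: the complement count $e_{\overline G}(v,S)=|S|-e_G(v,S)-[v\in S]$, equal class sizes from the extra hypothesis or Theorem~\ref{thm:FATreggen} in the forward direction and from Theorem~\ref{thm:2.12} on the connected regular graph $\overline G$ in the reverse direction, and tolerance by summing. One step fails, though, and the flaw is inherited from the paper rather than introduced by you. Invoking case~2 of Theorem~\ref{thm:FATreggen} when $\alpha>0$ requires $d\geq 1$, since its proof uses that each vertex has $\alpha d>0$ neighbours in every other class. If $d=0$, every condition in Definition~\ref{def:FAT} reads $0=0$, so every partition is FAT for every $\alpha$, and (1)$\Rightarrow$(2) is actually false. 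For example, let $G$ consist of three isolated vertices $a,b,c$, so that $\overline G=K_3$ is connected, and take $V_1=\{a\}$, $V_2=\{b,c\}$, $\alpha=1/2$. Then (1) holds, and since $\alpha\neq 0$ no equal-size condition is imposed. But in $K_3$ the vertex $a$ has all $2$ of its neighbours in $V_2$, while $b$ has only $1$ of its $2$ neighbours in $V_1$. So the partition is not FAT in $\overline G$ for any parameter.

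Your degenerate-case remark is in the wrong direction. For $d=0$ the reverse implication causes no trouble: Theorem~\ref{thm:2.12} applied to $K_N$ gives equal sizes, and any $\alpha\in[0,1]$ works in the edgeless graph. Moreover, there the formula $\frac{N/k-d\alpha}{N-d-1}=\frac{N}{k(N-1)}$ does not involve $\alpha$, so it cannot ``determine'' it. The repair is to assume $d\geq 1$, or to demand equal class sizes in (1) whenever $\alpha d=0$. With that, your proof goes through verbatim; note that $d\geq 1$ together with connectivity of $\overline G$ also forces $N-d-1\geq 1$. Relatedly, your check that $\alpha'\in[0,1]$ uses a vertex $v\notin V_i$, so it needs $k\geq 2$. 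For $k=1$ and $N\geq 2$ the formula exceeds $1$, which is acceptable only if the parameter of a $1$-coloring is regarded as vacuous.
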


\begin{proof}
    We first prove that $(1)$ implies $(2)$. Let $G$ be a $d$-regular graph, and let $V_1,\ldots,V_k$ be the coloring classes of a FAT $k$-coloring of $G$ with parameter $\alpha$. If $\alpha=0$, then by assumption all coloring classes have the same cardinality. If $\alpha>0$, then all coloring classes are equal-sized by Theorem \ref{thm:FATreggen}.
    
    Consider an arbitrary vertex $v \in V(\overline G) = V(G)$, and let $i \in \{1,\ldots,k\}$ be such that $v \in V_i$. For any index $j \in \{1,\ldots,k\}$ with $j \neq i$, the number of neighbors of $v$ lying in $V_j$ in the complement graph $\overline G$ is
    \[
    |V_j| - \alpha \deg_G v \;=\; \frac{N}{k} - \alpha d.
    \]
    Since $\overline G$ is an $(N-d-1)$-regular graph, it follows that the partition $V_1,\ldots,V_k$ yields a FAT $k$-coloring of $\overline G$ with parameter
    \[
        \frac{N/k - \alpha d}{N-d-1},
    \]
    which proves the first implication.
    
    For the converse implication, consider $\overline G$, which is regular and connected, and let $V_1,\ldots,V_k$ be the coloring classes of a FAT $k$-coloring of $\overline G$. By Theorem \ref{thm:2.12}, all coloring classes have the same cardinality, that is, $|V_i|=N/k$ for all $i=1,\ldots,k$.
    By an entirely analogous argument to the one above, now interchanging the roles of $G$ and $\overline G$, we conclude that the same partition $V_1,\ldots,V_k$ induces a FAT $k$-coloring of $G$ with the corresponding parameter.
\end{proof}

\begin{example}
    \begin{figure}
        \centering
        \includegraphics[scale=0.66]{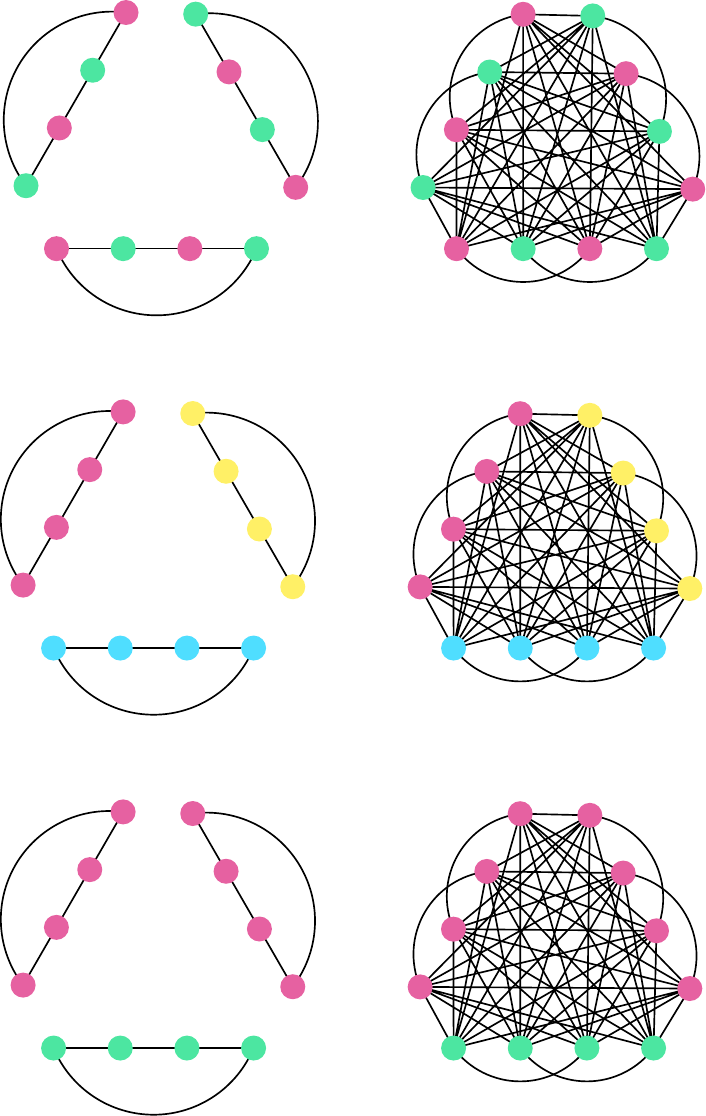}
        \caption{Three FAT colorings of $C_4\sqcup C_4\sqcup C_4$ and the corresponding induced coloring of its complement.}
        \label{fig:comp2}
    \end{figure}

    Let $C_4$ denote the $4$-cycle, and consider the disjoint union $C_4\sqcup C_4 \sqcup C_4$ of three such cycles. In Figure \ref{fig:comp2}, three pairwise distinct FAT colorings of $C_4\sqcup C_4 \sqcup C_4$ with parameter $\alpha = 0$ are illustrated.
    By Theorem \ref{thm:comp}, the top two FAT colorings induce FAT colorings of the complement, as their coloring classes are of equal cardinality. The corresponding FAT parameters are $\alpha = 0$ (top) and $\alpha = 4/9$ (middle). In contrast, the bottom FAT coloring does not induce a FAT coloring of the complement, since in this case the color classes are not equal-sized.
\end{example}

The previous example illustrates the necessity of the additional constraint $|V_i| = N/k$ in Theorem~\ref{thm:comp} when the graph $G$ is disconnected. In contrast, when $G$ is connected, the constraint is satisfied automatically by Theorem \ref{thm:2.12}, yielding the following corollary of Theorem \ref{thm:comp}.
\begin{corollary}
    Let $G$ be a connected $d$-regular graph with a connected complement $\overline{G}$. A partition $V_1,\ldots,V_k$ of the vertex set $V(G)$ induces a FAT $k$-coloring of $G$ with parameter $\alpha$ if and only if it induces a FAT $k$-coloring of $\overline G$ with parameter
    \[
        \frac{N/k - d\alpha}{N - d - 1}.
    \]
\end{corollary}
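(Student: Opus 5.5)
The plan is to derive the corollary directly from Theorem \ref{thm:comp}, using Theorem \ref{thm:2.12} to discharge the extra hypothesis in statement (1) of that theorem. That hypothesis requires equal class sizes when $\alpha=0$.

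For the forward direction, assume $V_1,\ldots,V_k$ is a FAT $k$-coloring of $G$ with parameter $\alpha$. Since $G$ is connected and regular, Theorem \ref{thm:2.12} gives $|V_i|=N/k$ for every $i$, whatever the value of $\alpha$. The case $\alpha=0$ with $k\ge 2$ cannot even occur in a connected graph, since some edge must join two classes. Either way, the additional clause in statement (1) of Theorem \ref{thm:comp} holds. Because $\overline G$ is connected by assumption, Theorem \ref{thm:comp} applies and yields a FAT $k$-coloring of $\overline G$ with parameter $(N/k-d\alpha)/(N-d-1)$.

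For the converse, assume the partition induces a FAT $k$-coloring of $\overline G$ with parameter $\alpha'=(N/k-d\alpha)/(N-d-1)$. By the implication $(2)\Rightarrow(1)$ of Theorem \ref{thm:comp}, the same partition is a FAT $k$-coloring of $G$. It remains to confirm that its parameter is $\alpha$. The complement computation shows that a vertex of $G$ has $N/k-(N-d-1)\alpha'=d\alpha$ neighbors in each other class, so the parameter is $d\alpha/d=\alpha$, provided $d>0$. When $d=0$, connectivity of $G$ forces $N=1$, which is trivial. Likewise, $N-d-1>0$ because $\overline G$ is connected: either $N=1$, or $\overline G$ has an edge.

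There is no real obstacle here. The only points needing care are the degenerate cases ($N=1$, $d=0$, $k=1$), where the denominators in the parameter formula have to make sense, and the remark that the equal-size condition is automatic by Theorem \ref{thm:2.12} because $G$ is connected.
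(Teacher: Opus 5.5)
Your proposal is correct and follows essentially the paper's argument. The corollary is Theorem~\ref{thm:comp} with the equal-size clause in (1) made automatic by Theorem~\ref{thm:2.12}, since $G$ is connected; your explicit check that the recovered parameter is exactly $\alpha$, and your degenerate-case bookkeeping, go slightly beyond the paper's one-line justification. One small correction: for $k=1$ (with $N\ge 2$) the issue is not a denominator but that the formula gives a value exceeding $1$, which is a quirk of the statement itself rather than a flaw in your argument.
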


\section{Tensor, Cartesian and strong products}\label{sec:products}
We now investigate how FAT colorings behave under graph products. Throughout this section, we fix two graphs $G_1$ and $G_2$.

\begin{definition}
    The \emph{tensor product} (also called the \emph{Kronecker product}) of $G_1$ and $G_2$ is the graph $G_1\times G_2$ whose vertex and edge sets are given by
    \begin{align*}
        V(G_1\times G_2) &\coloneqq V(G_1) \times V(G_2),\\
        E(G_1\times G_2) &\coloneqq \biggl\{ \bigl\{(v_i,w_k),(v_j,w_l)\bigr\} \colon \{v_i,v_j\} \in E(G_1) \text{ and } \{w_k,w_l\} \in E(G_2)\biggr\}.
    \end{align*}
    The \emph{Cartesian product} of $G_1$ and $G_2$ is the graph $G_1\square G_2$ with
    \begin{align*}
        V(G_1 \square G_2) &= V(G_1) \times V(G_2),\\
        E(G_1 \square G_2) &= \bigl\{\{(u_1,u_2),(v_1,v_2)\} \colon u_1 = v_1 \text{ and } u_2\sim v_2 \text{ or } u_1 \sim v_1 \text{ and } u_2 = v_2\bigr\}.
    \end{align*}
    The \emph{strong product} of $G_1$ and $G_2$ is the graph $$G_1\boxtimes G_2 \coloneqq G_1\times G_2 \cup G_1\square G_2,$$ whose vertex set is $V(G_1)\times V(G_2)$ and whose edge set is the union of the edge sets of the tensor and Cartesian products.
\end{definition}

The remainder of this section is structured as follows. In Subsection \ref{subsec:spectralprop}, we recall several spectral properties of the tensor, Cartesian, and strong products, which are 
essentially due to Butler \cite{butler2016algebraic} and are stated here in a form that is adapted to our 
setting. In Subsection \ref{subsec:lifting}, we then use these properties to show that any FAT coloring of a graph $G_1$ can be extended to the graph products $G_1\times G_2$, $G_1\square G_2$, and $G_1\boxtimes G_2$, provided that $G_1$ and $G_2$ satisfy appropriate conditions.

\subsection{Spectral properties}\label{subsec:spectralprop}
In this subsection, we recall several spectral properties of graph products that were originally established by Butler \cite{butler2016algebraic}, and that will be applied in the next subsection. The first result describes how the eigenvalues and eigenfunctions of the normalized Laplacian behave under the tensor product of graphs.

\begin{proposition}[Theorem 2 from \cite{butler2016algebraic}]\label{prop:tensorspec}
    Let $G$ and $H$ be simple graphs without isolated vertices, and let $0=\lambda_1\leq \lambda_2\leq\cdots\leq\lambda_{N_1}$ and $0=\theta_1\leq\theta_2\leq\cdots\leq\theta_{N_2}$ denote the eigenvalues of the normalized Laplacian associated with $G$ and $H$, respectively. Then the eigenvalues of the normalized Laplacian of the tensor product $G_1\times G_2$ are given by
    \begin{equation*}
        \bigl\{\lambda_i+\theta_j-\lambda_i\theta_j \colon 1\leq i\leq N_1,\; 1\leq j\leq N_2\bigr\}.
    \end{equation*}
    Moreover, the eigenfunctions of $L(G_1\times G_2)$ are precisely the tensor products of the eigenfunctions of $L(G_1)$ and $L(G_2)$, respectively.
\end{proposition}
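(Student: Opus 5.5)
The plan is to work with the random-walk form $P := D^{-1}A = I - L$ on each factor and on the product, since the normalized Laplacian of the tensor product is governed by this operator. The key observation is that in $G_1\times G_2$ the degree of $(v,w)$ is $\deg_{G_1}v\cdot\deg_{G_2}w$, and its neighbours are exactly the pairs $(v',w')$ with $v'\sim v$ and $w'\sim w$. Hence the adjacency matrix of the product is the Kronecker product $A_1\otimes A_2$, and the degree matrix is $D_1\otimes D_2$. These are invertible because we have assumed there are no isolated vertices. It follows that
\[
P(G_1\times G_2) = (D_1\otimes D_2)^{-1}(A_1\otimes A_2) = (D_1^{-1}A_1)\otimes(D_2^{-1}A_2) = P_1\otimes P_2 .
\]

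Next, take eigenfunctions $f^{(1)}$ of $L(G_1)$ with eigenvalue $\lambda_i$ and $f^{(2)}$ of $L(G_2)$ with eigenvalue $\theta_j$. Set $f^{(1,2)}(v,w):=f^{(1)}(v)f^{(2)}(w)$. A direct computation with \eqref{eq:eigenpair} gives
\[
P(G_1\times G_2)f^{(1,2)}(v,w) = \frac{1}{\deg v\,\deg w}\sum_{v'\sim v}\sum_{w'\sim w}f^{(1)}(v')f^{(2)}(w') = (1-\lambda_i)(1-\theta_j)f^{(1,2)}(v,w).
\]
Therefore $f^{(1,2)}$ is an eigenfunction of $L(G_1\times G_2)$ with eigenvalue $1-(1-\lambda_i)(1-\theta_j)=\lambda_i+\theta_j-\lambda_i\theta_j$.

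To get the whole spectrum with multiplicities, choose bases of eigenfunctions for each factor. This is possible since $L(G_\ell)$ is self-adjoint for the inner product \eqref{eq:innerproduct}, so we may take a $\langle\cdot,\cdot\rangle$-orthonormal eigenbasis. Their tensor products form $N_1N_2$ functions on $V(G_1)\times V(G_2)$. Under the product inner product with weights $\deg v\deg w$, which is exactly the inner product \eqref{eq:innerproduct} of $G_1\times G_2$, these functions are orthonormal, hence linearly independent, hence a basis. This shows that the listed multiset is the full spectrum.

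The step I expect to need the most care is the claim that the eigenfunctions are \emph{precisely} the tensor products. Literally, an eigenspace may be spanned by several product functions whose sums are not pure tensors, for instance when distinct pairs $(i,j)$ give equal values $\lambda_i+\theta_j-\lambda_i\theta_j$. So I would prove and phrase the statement as follows: every eigenspace of $L(G_1\times G_2)$ is spanned by the tensor products $f^{(1)}\otimes f^{(2)}$ whose eigenvalues combine to the given value. This follows because the product basis diagonalizes $L(G_1\times G_2)$.
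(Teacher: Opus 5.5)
Your proof is correct and follows essentially the same route as the paper. The paper defers to Butler and relies on the Kronecker factorization $D^{-1}A(G_1\times G_2)=(D_1^{-1}A_1)\otimes(D_2^{-1}A_2)$ and the multiplicativity of spectra in Remark~\ref{rmk:tensor}, giving eigenvalues $1-(1-\lambda_i)(1-\theta_j)$. Your caveat about the word ``precisely'' is also well founded, since an eigenspace can contain non-pure tensors such as $f_1\otimes g_1+f_2\otimes g_2$. The accurate statement is that each eigenspace is spanned by such tensor products, exactly as you phrase it, and this is all that Theorem~\ref{thm:tensor} later uses.
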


\begin{remark}\label{rmk:tensor}
Given a square matrix $M$, we let $\Spec(M)$ denote the multiset of its eigenvalues, counted with algebraic multiplicity. It is a classical result in matrix theory that, for any two square matrices $M_1$ and $M_2$,
    \[
    \Spec(M_1\otimes M_2) \;=\; \bigl\{\mu_1\mu_2 \colon \mu_i\in \Spec(M_i)\bigr\},
    \]
    and that the corresponding eigenfunctions of $M_1\otimes M_2$ associated with the eigenvalue $\mu_1\mu_2$ are precisely those of the form $f_1\otimes f_2$, where $f_i$ is an eigenfunction of $M_i$ with eigenvalue $\mu_i$ for $i=1,2$.
\end{remark}    

An analogous result can be established for the eigenfunctions corresponding to the Cartesian and strong products of two regular graphs. The following statement is adapted from Proposition 3 in \cite{butler2016algebraic}, where the eigenvalue formulas are established. We state it here in a form that also makes the associated eigenfunctions explicit, since we know from Theorem \ref{thm:eigenfunctions} that eigenfunctions are a crucial tool for the identification of FAT colorings. The proof follows the argument of Butler \cite{butler2016algebraic}, adapted to explicitly describe the corresponding eigenfunctions, and is analogous to the proof of the corresponding statement for the adjacency matrix given by Brouwer and Haemers \cite{BrouwerHaemers2012}. Both Proposition \ref{prop:tensorspec} and Proposition \ref{prop:Cartstrongeigen} are used in Subsection \ref{subsec:lifting} to prove two results concerning FAT colorings of the product graphs $G_1\times G_2$, $G_1\square G_2$, and $G_1\boxtimes G_2$.

\begin{proposition}\label{prop:Cartstrongeigen}
    Let $G_1$ and $G_2$ be $d^{(1)}$- and $d^{(2)}$-regular graphs, respectively. Then the eigenfunctions of the normalized Laplacian matrices $L(G_1\square G_2)$ and $L(G_1\boxtimes G_2)$ are precisely the tensor products $f_1\otimes f_2$, where $f_i$ is an eigenfunction of $L(G_i)$ for $i=1,2$. The corresponding eigenvalues of $L(G_1 \square G_2)$ are given by
\begin{equation*}
\biggl\{ \frac{d^{(1)}\lambda_i + d^{(2)} \theta_j}{d^{(1)}+d^{(2)}} \colon 1\leq i\leq N_1,\; 0 \leq j \leq N_2 \biggr\},
\end{equation*}
and the eigenvalues of $L(G_1\boxtimes G_2)$ are given by
\begin{equation*}
    \biggl\{ \frac{d^{(1)} d^{(2)}(\lambda_i+\theta_j - \lambda_i\theta_j) + d^{(1)}\lambda_i + d^{(2)}\theta_j}{d^{(1)}d^{(2)}+d^{(1)}+d^{(2)}}\colon 1\leq i\leq N_1,\; 0 \leq j \leq N_2 \biggr\}.
\end{equation*}
\end{proposition}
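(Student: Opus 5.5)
We will work throughout with matrices. Since $G_1$ and $G_2$ are regular, the normalized Laplacians are $L(G_i)=I-\frac{1}{d^{(i)}}A_i$, so an eigenfunction $f_i$ of $L(G_i)$ with eigenvalue $\lambda$ is the same thing as an eigenfunction of $A_i$ with eigenvalue $d^{(i)}(1-\lambda)$. The adjacency matrices of the products are
\[
A(G_1\square G_2)=A_1\otimes I+I\otimes A_2,\qquad A(G_1\boxtimes G_2)=A_1\otimes A_2+A_1\otimes I+I\otimes A_2 .
\]
Both products are regular, of degree $d^{(1)}+d^{(2)}$ and $d^{(1)}d^{(2)}+d^{(1)}+d^{(2)}$ respectively. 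Hence $L(G_1\square G_2)=I-\frac{1}{d^{(1)}+d^{(2)}}A(G_1\square G_2)$, and similarly for the strong product. This reduces the whole statement to the adjacency-matrix argument of Brouwer and Haemers.

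\textbf{Step 1: tensor products are eigenfunctions.} Take eigenfunctions $f_1,f_2$ of $L(G_1),L(G_2)$ with eigenvalues $\lambda_i,\theta_j$, and write $\mu_1=d^{(1)}(1-\lambda_i)$, $\mu_2=d^{(2)}(1-\theta_j)$ for the corresponding adjacency eigenvalues. By the mixed-product rule $(M_1\otimes M_2)(f_1\otimes f_2)=M_1f_1\otimes M_2f_2$ (cf.\ Remark \ref{rmk:tensor}),
\[
(A_1\otimes I+I\otimes A_2)(f_1\otimes f_2)=(\mu_1+\mu_2)\,f_1\otimes f_2 ,
\]
and for the strong product the eigenvalue is $\mu_1\mu_2+\mu_1+\mu_2$.

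\textbf{Step 2: convert back to normalized eigenvalues.} For the Cartesian product,
\[
1-\frac{\mu_1+\mu_2}{d^{(1)}+d^{(2)}}=\frac{d^{(1)}\lambda_i+d^{(2)}\theta_j}{d^{(1)}+d^{(2)}} .
\]
For the strong product, expanding $\mu_1\mu_2=d^{(1)}d^{(2)}(1-\lambda_i-\theta_j+\lambda_i\theta_j)$ and subtracting from the total degree gives the numerator
\[
d^{(1)}d^{(2)}(\lambda_i+\theta_j-\lambda_i\theta_j)+d^{(1)}\lambda_i+d^{(2)}\theta_j ,
\]
which is the claimed formula. This step is routine algebra.

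\textbf{Step 3: these exhaust the eigenfunctions.} The word ``precisely'' should be read as: every eigenfunction is a linear combination of such tensor products with a common product eigenvalue. This is the same convention Remark \ref{rmk:tensor} and Proposition \ref{prop:tensorspec} use, since eigenspaces with multiplicity are not spanned by single pure tensors.
\begin{itemize}
\item Choose bases of eigenfunctions $\{f^{(1)}_a\}$ of $L(G_1)$ and $\{f^{(2)}_b\}$ of $L(G_2)$. These exist because each $L(G_i)$ is self-adjoint with respect to $\langle\cdot,\cdot\rangle$, and here it is simply symmetric since $G_i$ is regular.
\item The $N_1N_2$ functions $f^{(1)}_a\otimes f^{(2)}_b$ are linearly independent, since the tensor product of bases is a basis of $\R^{V_1\times V_2}$. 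They therefore form an eigenbasis of the product Laplacian, which gives the full spectrum with multiplicities.
\item Any eigenfunction with eigenvalue $\nu$ is a combination of basis elements. By uniqueness of the expansion in an eigenbasis, it uses only the basis elements whose product eigenvalue equals $\nu$.
\end{itemize}

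The main obstacle is exactly this last step: being precise about what ``precisely'' means in the presence of multiplicities. The spectral computations in Steps 1 and 2 are direct.
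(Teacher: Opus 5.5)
Your proposal is correct and follows essentially the same route as the paper: write the product adjacency matrices as $A_1\otimes I+I\otimes A_2$ and $A_1\otimes A_2+A_1\otimes I+I\otimes A_2$ (Brouwer--Haemers), use regularity to make each normalized Laplacian an affine function of its adjacency matrix, and take tensor products of eigenbases. The only differences are minor. You derive the normalized eigenvalues by direct algebra instead of citing Butler's Proposition~3. You also make the ``precisely'' (completeness) part explicit, via uniqueness of expansion in the tensor eigenbasis, whereas the paper only asserts the existence of a common tensor eigenbasis.
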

\begin{proof}
    By Propositions 1.4.6 and 1.4.8 in \cite{BrouwerHaemers2012}, the adjacency matrices of the Cartesian product $G_1\square G_2$ and the strong product $G_1\boxtimes G_2$ are given by
\begin{align*}
    A(G_1\square G_2) &= A(G_1)\otimes I + I\otimes A(G_2),\\
    A(G_1\boxtimes G_2) &= A(G_1)\otimes A(G_2) + A(G_1)\otimes I + I\otimes A(G_2).
\end{align*}
Since $G_1$ and $G_2$ are $d^{(1)}$- and $d^{(2)}$-regular, respectively, we see that
\begin{align*}
    D^{-1}A(G_1\square G_2) &= \frac{1}{d^{(1)}+d^{(2)}}\bigl(A(G_1)\otimes I + I\otimes A(G_2)\bigr),\\
    D^{-1}A(G_1\boxtimes G_2) &= \frac{1}{d^{(1)}+d^{(2)}+d^{(1)}d^{(2)}}\bigl(A(G_1)\otimes A(G_2) + A(G_1)\otimes I + I\otimes A(G_2)\bigr).
\end{align*}
It follows from the previous identities, together with Remark \ref{rmk:tensor}, that all four matrices $A(G_1\square G_2)$, $A(G_1\boxtimes G_2)$, $D^{-1}A(G_1\square G_2)$, and $D^{-1}A(G_1\boxtimes G_2)$ admit a common eigenbasis consisting of tensor products $f_1\otimes f_2$, where $f_i$ is an eigenfunction of $A(G_i)$, $D^{-1}A(G_i)$, and $L(G_i)$ for $i=1,2$. Consequently, the normalized Laplacian matrices $L(G_1\square G_2)$ and $L(G_1\times G_2)$ also have eigenfunctions of the form $f_1\otimes f_2$.

Now let $\lambda_1\leq \lambda_2\leq \cdots \leq \lambda_{N_1}$ and $\theta_1\leq \theta_2\leq \cdots \leq \theta_{N_2}$ denote the eigenvalues of $L(G_1)$ and $L(G_2)$, respectively.
By Proposition 3 in \cite{butler2016algebraic}, the eigenvalues of the normalized Laplacian of $G_1 \square G_2$ are
\begin{equation*}
\biggl\{ \frac{d^{(1)}\lambda_i + d^{(2)} \theta_j}{d^{(1)}+d^{(2)}} \colon 1\leq i\leq N_1,\, 1 \leq j \leq N_2 \biggr\},
\end{equation*}
and the eigenvalues of the normalized Laplacian of $G_1\boxtimes G_2$ are
\begin{equation*}
    \biggl\{ \frac{d^{(1)} d^{(2)}(\lambda_i+\theta_j - \lambda_i\theta_j) + d^{(1)}\lambda_i + d^{(2)}\theta_j}{d^{(1)}d^{(2)}+d^{(1)}+d^{(2)}}\colon 1\leq i\leq N_1,\, 1 \leq j \leq N_2 \biggr\}.\qedhere
\end{equation*}
\end{proof}

\subsection{Lifting FAT colorings}\label{subsec:lifting}
\begin{figure}
    \centering
    \includegraphics[scale=0.61]{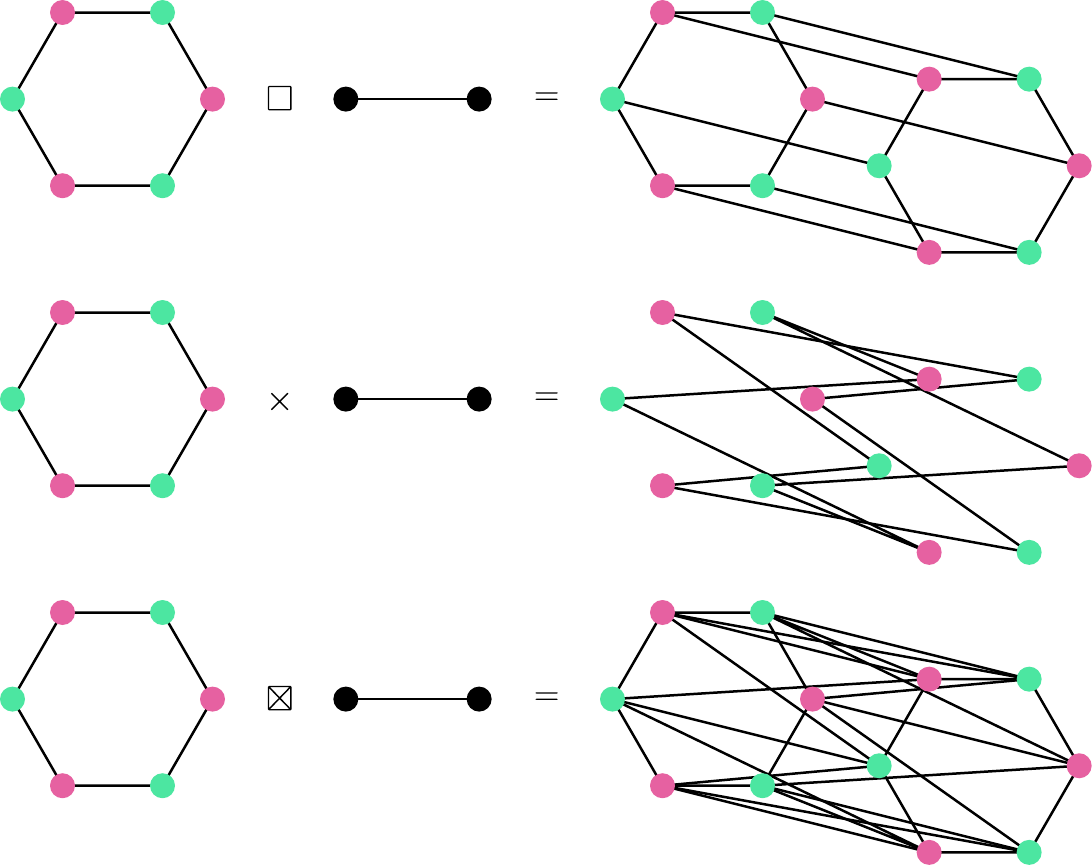}
    \caption{Lifting a $2$-coloring of $C_6$ to its Cartesian (top), tensor (middle), and strong (bottom) products with $K_2$.}
    \label{fig:products2}
\end{figure}

\begin{figure}
    \centering
    \includegraphics[scale=0.61]{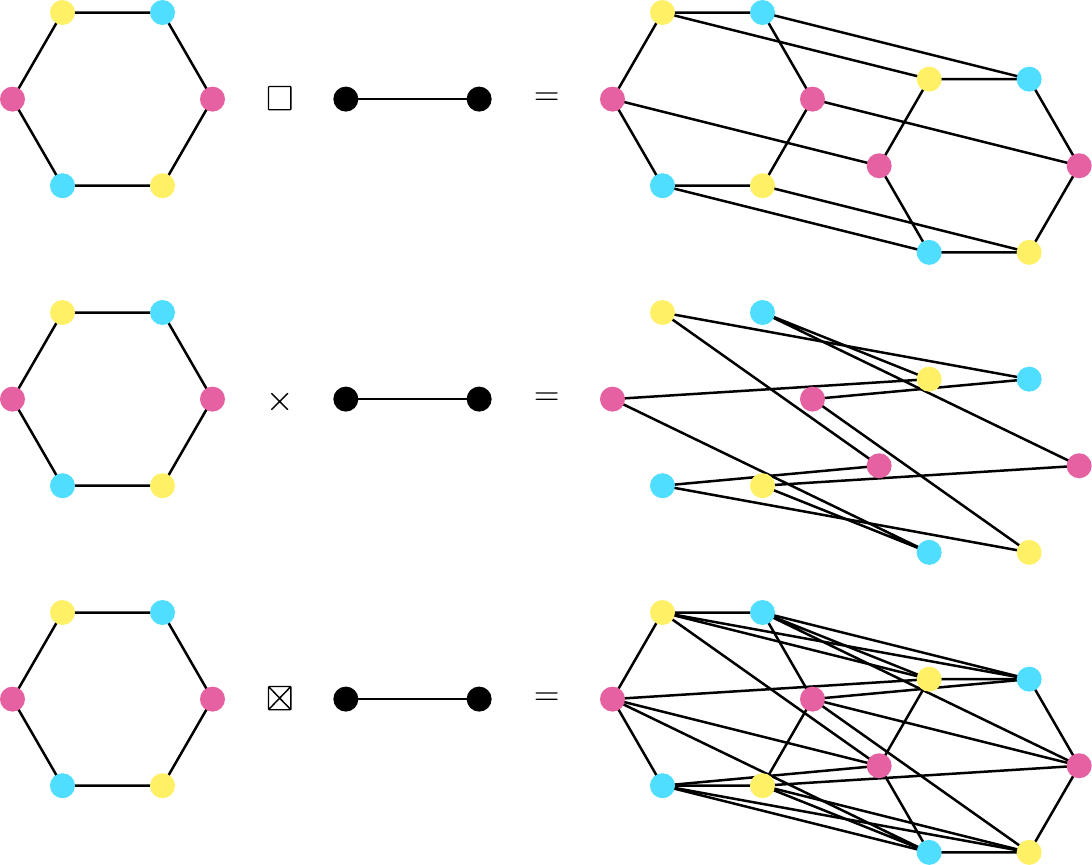}
    \caption{Lifting a $3$-coloring of $C_6$ to its Cartesian (top), tensor (middle), and strong (bottom) products with $K_2$.}
    \label{fig:products3}
\end{figure}

We now use Propositions \ref{prop:tensorspec} and \ref{prop:Cartstrongeigen} to prove two results concerning FAT colorings of the product graphs $G_1\times G_2$, $G_1\square G_2$, and $G_1\boxtimes G_2$. We formulate and prove the two theorems separately, due to the different conditions on the graphs $G_1$ and $G_2$, although their statements and proofs are analogous: one can lift a FAT coloring of $G_1$ to the product graphs $G_1\times G_2$, $G_1\square G_2$, and $G_1\boxtimes G_2$ (see Figures \ref{fig:products2} and \ref{fig:products3}).

\begin{theorem}\label{thm:tensor}
    Let $V_1,\ldots,V_k$ denote the coloring classes of $G_1$ with respect to a FAT $k$-coloring with parameter $\alpha$. Then, the tensor product $G_1\times G_2$ also admits a FAT $k$-coloring with parameter $\alpha$. Moreover, the corresponding coloring classes in $G_1\times G_2$ are the sets $V_i\times V(G_2)$.
\end{theorem}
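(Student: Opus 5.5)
The plan is to verify the FAT condition directly by counting neighbors in the tensor product. A spectral argument via Theorem \ref{thm:eigenfunctions} and Proposition \ref{prop:tensorspec} will serve as a cross-check.

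\textbf{Combinatorial step.} Set $W_i \coloneqq V_i\times V(G_2)$. Fix a vertex $(v,w)$ of $G_1\times G_2$. By definition of the tensor product, its neighbors are exactly the pairs $(v',w')$ with $v'\sim v$ in $G_1$ and $w'\sim w$ in $G_2$. Hence
\[
\deg_{G_1\times G_2}(v,w)=\deg_{G_1}v\cdot\deg_{G_2}w,
\qquad
e\bigl((v,w),W_i\bigr)=e(v,V_i)\cdot\deg_{G_2}w .
\]
If $v\notin V_i$, the FAT property of $G_1$ gives $e\bigl((v,w),W_i\bigr)=\alpha\deg_{G_1}v\deg_{G_2}w=\alpha\deg(v,w)$. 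If $v\in V_i$, the same computation gives $\beta\deg(v,w)$. So $W_1,\dots,W_k$ is a FAT $k$-coloring with the same parameter $\alpha$, and $\beta$ is unchanged.

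\textbf{Degenerate cases.} Isolated vertices are harmless: if $\deg w=0$ or $\deg v=0$, then $(v,w)$ is isolated and the condition holds trivially by the remark on isolated vertices. One should also check that each $W_i$ is nonempty, which holds whenever $V(G_2)\neq\emptyset$.

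\textbf{Spectral cross-check.} Assume $G_1,G_2$ have no isolated vertices and $G_1$ is connected. By Theorem \ref{thm:eigenfunctions}, each $f_{ij}$ is an eigenfunction of $L(G_1)$ with eigenvalue $k\alpha$, and $\boldsymbol 1$ is an eigenfunction of $L(G_2)$ with eigenvalue $0$. Proposition \ref{prop:tensorspec} then makes $f_{ij}\otimes\boldsymbol 1$, which is precisely the indicator difference of $W_i$ and $W_j$, an eigenfunction with eigenvalue $k\alpha+0-0=k\alpha$.

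The converse direction of Theorem \ref{thm:eigenfunctions}, however, assumes the product is connected, and $G_1\times G_2$ may fail to be connected, for instance when both factors are bipartite. That is why the direct counting argument is the primary proof and the spectral argument only a sanity check.
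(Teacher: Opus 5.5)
Your proof is correct, and your primary argument takes a different route from the paper. The paper's proof is exactly your spectral cross-check. By Theorem \ref{thm:eigenfunctions}, the $f_{ij}^{(1)}$ are eigenfunctions of $L(G_1)$ with eigenvalue $k\alpha$. By Proposition \ref{prop:tensorspec}, the functions $f_{ij}^{(1)}\otimes\mathbf{1}^{(2)}$, which are the indicator differences of $V_i\times V(G_2)$ and $V_j\times V(G_2)$, are eigenfunctions of $L(G_1\times G_2)$ with eigenvalue $k\alpha+0-k\alpha\cdot 0=k\alpha$. The converse direction of Theorem \ref{thm:eigenfunctions} then gives a FAT coloring with parameter $\alpha$.

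Your direct count rests on $\deg(v,w)=\deg v\cdot\deg w$ and $e\bigl((v,w),V_i\times V(G_2)\bigr)=e(v,V_i)\cdot\deg w$. It is shorter, self-contained, and strictly more general, since it needs neither connectivity nor the absence of isolated vertices.

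The paper's route silently requires $G_1$ and $G_2$ to have no isolated vertices. This is a hypothesis of Proposition \ref{prop:tensorspec}, and an isolated vertex in $V_i\cup V_j$ generally prevents $f_{ij}^{(1)}$ from being an eigenfunction at all. The paper also invokes Theorem \ref{thm:eigenfunctions}, which is stated for connected graphs, on $G_1$ and on $G_1\times G_2$; the latter is disconnected, for instance, when $G_1=G_2=K_2$. Your caution about this is justified, but the gap is only formal. The proof of Theorem \ref{thm:eigenfunctions} is a vertex-by-vertex computation that uses only $\deg v>0$, so it applies verbatim to any graph without isolated vertices, and $G_1\times G_2$ has none when $G_1$ and $G_2$ have none.

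The spectral route buys uniformity with Theorem \ref{thm:Cartstrong}, where the new parameter is read off directly from the eigenvalue formulas of Proposition \ref{prop:Cartstrongeigen}. Your route buys an elementary proof valid for arbitrary $G_1$ and $G_2$.
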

\begin{proof}
    By Theorem \ref{thm:eigenfunctions}, the functions $f_{ij}^{(1)}$ are eigenfunctions of $G_1$ with eigenvalue $k\alpha$. Furthermore, by Proposition \ref{prop:tensorspec}, the functions $f_{ij}^{(1)}\otimes \boldsymbol {1}^{(2)}$ are eigenfunctions of the tensor product $G_1\times G_2$. Observe that
    \begin{equation*}
        f_{ij}^{(1)}\otimes \boldsymbol {1}^{(2)}(v,w) = \begin{cases}
            1 &\text{ if } (v, w) \in V_i\times V(G_2),\\
            -1 &\text{ if } (v,w)\in V_j\times V(G_2),\\
            0 &\text{ otherwise.}
        \end{cases}
    \end{equation*}
    Consequently, the function $f_{ij}^{(1)}\otimes \boldsymbol {1}^{(2)}$ coincides with the function $f_{ij}^{(1,2)}$ associated with the partition classes $V_i\times V(G_2)$ for $i=1,\ldots,k$. By Theorem \ref{thm:eigenfunctions}, the partition corresponds to a FAT $k$-coloring of $G_1\times G_2$.

    To determine the corresponding parameters, we apply Proposition \ref{prop:tensorspec}. The eigenvalue associated with the functions $f_{ij}^{(1,2)}$ is given by
    \[
    k\alpha + 0 - k\alpha \cdot 0 = k\alpha.
    \]
Therefore, by Theorem \ref{thm:eigenfunctions}, the partition classes $V_i\times V(G_2)$ induce a FAT $k$-coloring of $G_1\times G_2$ with parameter $\alpha$.
\end{proof}

\begin{theorem}\label{thm:Cartstrong}
    Let $G_1$ and $G_2$ be $d^{(1)}$- and $d^{(2)}$-regular graphs, respectively. 
    Let $V_1,\ldots,V_k$ denote the coloring classes of $G_1$ with respect to a FAT $k$-coloring with parameter $\alpha$. Then, $G_1\square G_2$ and $G_1 \boxtimes G_2$ each admit a FAT $k$-coloring with parameters
    \[
        \frac{\alpha d^{(1)}}{d^{(1)}+d^{(2)}} 
        \quad\text{and}\quad
        \frac{\alpha\bigl(d^{(1)} + d^{(1)}d^{(2)}\bigr)}{d^{(1)} + d^{(2)} + d^{(1)}d^{(2)}},
    \]
    respectively. In both cases, the coloring classes are given by $V_i\times V(G_2)$.
\end{theorem}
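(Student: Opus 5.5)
The plan mirrors the proof of Theorem \ref{thm:tensor}, with Proposition \ref{prop:Cartstrongeigen} replacing Proposition \ref{prop:tensorspec}. By Theorem \ref{thm:eigenfunctions}, each $f_{ij}^{(1)}$ associated with the classes $V_1,\ldots,V_k$ is an eigenfunction of $L(G_1)$ with eigenvalue $k\alpha$. The constant function $\boldsymbol 1^{(2)}$ is an eigenfunction of $L(G_2)$ with eigenvalue $0$. Proposition \ref{prop:Cartstrongeigen} then says that $f_{ij}^{(1)}\otimes \boldsymbol 1^{(2)}$ is an eigenfunction of both $L(G_1\square G_2)$ and $L(G_1\boxtimes G_2)$. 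As in the tensor case, this function equals $1$ on $V_i\times V(G_2)$, $-1$ on $V_j\times V(G_2)$, and $0$ elsewhere. So it is exactly the function $f_{ij}^{(1,2)}$ of Definition \ref{def:f_ij} for the partition $\{V_i\times V(G_2)\}_{i=1}^k$.

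Next, I read off the eigenvalues by substituting $\lambda=k\alpha$ and $\theta=0$ into the formulas of Proposition \ref{prop:Cartstrongeigen}. For the Cartesian product this gives $d^{(1)}k\alpha/(d^{(1)}+d^{(2)})$. For the strong product it gives $\bigl(d^{(1)}d^{(2)}k\alpha+d^{(1)}k\alpha\bigr)/\bigl(d^{(1)}d^{(2)}+d^{(1)}+d^{(2)}\bigr)$. Each is $k$ times the parameter claimed in the statement.

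Finally, I apply the ``if'' direction of Theorem \ref{thm:eigenfunctions}. Since all $f_{ij}^{(1,2)}$ share the eigenvalue $k\alpha'$, the partition is a FAT $k$-coloring with parameter $\alpha'$. The main obstacle is that Theorem \ref{thm:eigenfunctions} is stated for connected graphs, and the products need not be connected. Moreover, Proposition \ref{prop:Cartstrongeigen} is stated for eigenbases, and I should confirm directly that this particular tensor product is an eigenfunction. To handle both points robustly, I will also verify the claim directly, which is short because of regularity. A vertex $(v,w)$ with $v\in V_\ell$ has degree $d^{(1)}+d^{(2)}$ in $G_1\square G_2$. Its neighbours in $V_i\times V(G_2)$ for $i\neq \ell$ are exactly the pairs $(v',w)$ with $v'\sim v$ and $v'\in V_i$. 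There are $\alpha d^{(1)}$ of these, since the neighbours $(v,w')$ with $w'\sim w$ stay in $V_\ell\times V(G_2)$. Dividing by the degree gives the fraction $\alpha d^{(1)}/(d^{(1)}+d^{(2)})$.

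For $G_1\boxtimes G_2$ the degree is $d^{(1)}+d^{(2)}+d^{(1)}d^{(2)}$. The neighbours in $V_i\times V(G_2)$ are the pairs $(v',w)$ and $(v',w')$ with $v'\sim v$, $v'\in V_i$, and $w'\sim w$. There are $\alpha d^{(1)}(1+d^{(2)})$ of these, which gives the claimed parameter. This direct count checks only the definition, so it removes any dependence on connectivity. I will present it as the concluding verification alongside the spectral argument.
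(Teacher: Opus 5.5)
Your spectral argument is the paper's proof essentially verbatim. You lift $f_{ij}^{(1)}$ to $f_{ij}^{(1)}\otimes\boldsymbol 1^{(2)}$, identify it with $f_{ij}^{(1,2)}$ for the classes $V_i\times V(G_2)$, read off the eigenvalue from Proposition~\ref{prop:Cartstrongeigen} with $\theta=0$, and conclude with Theorem~\ref{thm:eigenfunctions}.

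The direct neighbour count you add goes beyond the paper. It is a correct, self-contained second proof that needs no spectral input. It checks Definition~\ref{def:FAT} directly. The own-class condition with $\beta'=1-(k-1)\alpha'$ holds automatically, because the $k$ fractions at each vertex sum to $1$. The count also makes visible why regularity is needed: the fraction $\alpha\deg v/(\deg v+\deg w)$ must be the same at every vertex.

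Your connectivity concern is fair as a matter of stated hypotheses. The paper applies Theorem~\ref{thm:eigenfunctions} to $G_1$ and to the products without checking connectivity. The concern is harmless, though. The proof of that theorem only uses that vertices have positive degree, and the FAT condition holds vacuously at isolated vertices. So the spectral route is sound whenever the degrees are positive, and your count simply avoids the question altogether.
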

\begin{proof}
    By Theorem \ref{thm:eigenfunctions}, the functions $f_{ij}^{(1)}$ are eigenfunctions of $G_1$ with eigenvalue $k\alpha$. By Proposition \ref{prop:Cartstrongeigen}, the functions $f_{ij}^{(1)}\otimes \boldsymbol{1}^{(2)}$ are eigenfunctions of both $G_1\square G_2$ and $G_1 \boxtimes G_2$. Analogous to the proof of Theorem \ref{thm:tensor}, the function $f_{ij}^{(1)}\otimes \boldsymbol{1}^{(2)}$ coincides with the function $f_{ij}^{(1,2)}$ whose partition classes are given by $V_i\times V(G_2)$ for $i=1,\ldots,k$. By Theorem \ref{thm:eigenfunctions}, the partition gives rise to a FAT $k$-coloring of $G_1\square G_2$ and of $G_1 \boxtimes G_2$.

    To determine the corresponding parameters, we use Proposition \ref{prop:Cartstrongeigen}.
    \begin{itemize}
        \item For $G_1\square G_2$, the eigenvalue associated with the functions $f_{ij}^{(1,2)}$ is
        \[
        \frac{d^{(1)} \cdot k\alpha + d^{(2)}\cdot 0}{d^{(1)}+d^{(2)}} = k\cdot\frac{d^{(1)}\alpha}{d^{(1)}+d^{(2)}}.
        \]
        Consequently, the parameter corresponding to the FAT $k$-coloring of $G_1\square G_2$ with coloring classes $V_i\times V(G_2)$ equals $\alpha d^{(1)}/\bigl(d^{(1)}+d^{(2)}\bigr)$.
        \item For $G_1\boxtimes G_2$, the eigenvalue associated with the functions $f_{ij}^{(1,2)}$ is
        \[
        \frac{d^{(1)}d^{(2)}\bigl(k\alpha + 0 - k\alpha \cdot 0\bigr) + d^{(1)}\cdot k\alpha + d^{(2)}\cdot 0}{d^{(1)}d^{(2)}+d^{(1)}+d^{(2)}} 
        = k\cdot \frac{d^{(1)}d^{(2)}\cdot \alpha  + d^{(1)}\cdot \alpha }{d^{(1)}d^{(2)}+d^{(1)}+d^{(2)}}.
        \]
        Hence, the parameter corresponding to the FAT $k$-coloring of $G_1\boxtimes G_2$ with coloring classes $V_i\times V(G_2)$ is given by $\alpha\bigl(d^{(1)}+d^{(1)}d^{(2)}\bigr)/\bigl( d^{(1)}+d^{(2)}+d^{(1)}d^{(2)}\bigr)$.\qedhere
    \end{itemize}
\end{proof}

\section*{Funding}
Raffaella Mulas is supported by the Dutch Research Council (NWO) through the grant VI.Veni.232.002.

\bibliographystyle{plain} 
\bibliography{Bibliography}

\end{document}